\documentclass{tran-l}

\usepackage{amssymb}
\usepackage{amsmath}

%%%%%%%%%%%%%%%%%%%%%%%%%%%%%%%%%%%%%%%%%%%%%%%%%%%%%%%%%%%%%%%%%%%%%%%%%%%%%%%%%%%%%%%%%%%%%%%%%%%%
\usepackage{diagrams}
%\usepackage{graphicx}

%TCIDATA{OutputFilter=LATEX.DLL}
%TCIDATA{Created=Sun Aug 19 19:52:01 2007}
%TCIDATA{LastRevised=Mon Aug 18 19:31:34 2008}
%TCIDATA{<META NAME="Graphicness" CONTENT="32">}
%TCIDATA{<META NAME="DocumentShell" CONTENT="General\Blank Document">}
%TCIDATA{CSTFile=LaTeX article (bright).cst}

\newtheorem{theorem}{Theorem}[section]

\newtheorem{corollary}[theorem]{Corollary}

\newtheorem{definition}[theorem]{Definition}

\newtheorem{lemma}[theorem]{Lemma}

\newtheorem{proposition}[theorem]{Proposition}

\newarrow {Partial} ----{harpoon}
\newarrow{Into} C--->

\begin{document}

\title{Representing geometric morphisms using power locale monads}
\author{Christopher F. Townsend}
\address{8 Aylesbury Road, Tring, Hertfordshire, HP23 4DJ, U.K.}

\maketitle

\begin{abstract}
It it shown that geometric morphisms between elementary toposes can be
represented as adjunctions between the corresponding categories of locales.
The adjunctions are characterized as those that preserve the order
enrichment, commute with the double power locale
monad and whose right adjoints preserve finite coproduct. They are also characterized as those adjunctions that
preserve the order enrichment and commute with both the lower and the upper power locale monads.
\end{abstract}

\section{Introduction}

Every geometric morphism $f:\mathcal{F\rTo E}$ between elementary
toposes gives rise to an order-enriched adjunction $\Sigma _{f}\dashv f^{\ast }$ between
the category of locales in $\mathcal{F}$ and the category of locales in $%
\mathcal{E}$ with the right adjoint being given by pullback, $f^{\ast }:%
\mathbf{Loc}_{\mathcal{E}}\rTo \mathbf{Loc}_{\mathcal{F}}$. This
pullback functor has the known properties that it preserves order
enrichment, commutes with finite coproducts and there is a monad isomorphism
\begin{equation*}
f^{\ast }\mathbb{P}_{\mathcal{E}}\cong \mathbb{P}_{\mathcal{F}}f^{\ast }\text{,}
\end{equation*}
where $\mathbb{P}$ is the double power locale construction. The main aim of this paper is to show that any order-enriched
adjunction between locales in $\mathcal{F}$ and locales in $\mathcal{E}$
that satisfies these properties arises as the pullback adjunction of a
geometric morphism unique up to natural isomorphism.

This provides a representation theorem for geometric morphisms showing a new relationship between the power locale monads and the morphisms of topos theory. In some categorical approaches to locale theory (e.g. \cite{towhofman} and \cite{Vicpoints}) the power locale monads take on roles analogous to the more familiar power set monad of elementary topos theory. It is the power locale monads that provide the structure of locale theory in a manner that is similar to the way that the powerset provides the structure of set theory. The implication of the representation theorem of this paper is therefore that in topos theory the usual notion of morphism (i.e. geometric morphism) can in fact be interpreted as structure preserving map.

\section{The main theorem and summary proof}

The reader is assumed to be familiar with locale theory and topos theory, \cite{Elephant}. The main theorem that is to be proved is now stated, but note that the detailed definitions of the terms that are used will be given in the main body of the paper.
\begin{theorem}
\label{maintheorm}
For any two toposes $\mathcal{E}$ and $\mathcal{F}$ there are categorical equivalences between\\
(i) the category of geometric morphisms from $\mathcal{F}$ to $\mathcal{E}$,  \\
(ii) the category of order-enriched adjunctions $L\dashv R:
\mathbf{Loc}_{\mathcal{F}}\pile{\rTo \\ \lTo} \mathbf{Loc}_{\mathcal{E}}$ with $R$ preserving finitary coproduct and for which there exists some monad isomorphism $\phi:R\mathbb{P}_{\mathcal{E}}\cong \mathbb{P}_{\mathcal{F}}R$ such that $R$ preserves, up to the monad isomorphism $\phi$, the strength on $\mathbb{P}$,\\
(iii) the category of order-enriched adjunctions $L\dashv R:
\mathbf{Loc}_{\mathcal{F}}\pile{\rTo \\ \lTo} \mathbf{Loc}_{\mathcal{E}}$ with $R$ preserving finitary coproduct and for which there exists some monad isomorphism $\phi:R\mathbb{P}_{\mathcal{E}}\cong \mathbb{P}_{\mathcal{F}}R$; and, \\
(iv) the category of order-enriched adjunctions $L\dashv R:
\mathbf{Loc}_{\mathcal{F}}\pile{\rTo \\ \lTo} \mathbf{Loc}_{\mathcal{E}}$ for which there exists two monad isomorphisms $\phi_L:R P_L^{\mathcal{E}}\cong P_L^{\mathcal{F}}R$ and $\phi_U:R P_U^{\mathcal{E}}\cong P_U^{\mathcal{F}}R$ such that $R$ preserves, up to these monad isomorphisms, the canonical distribution isomorphism $P_L P_U \cong P_U P_L$.
\\
\end{theorem}
The morphisms of the categories (ii) to (iv) are natural transformations between the right adjoints and are not required to interact with the monad isomorphisms.

Let us give an overview of how the proof is going to work. In each part of the proof what is required is a check that the various conditions placed on an adjunction $L \dashv R$ are equivalent. To do this we will repeatedly rely on a characterization of monad isomorphisms which occurs when there is an adjunction between categories, each with a monad. This categorical result is that given a functor (with a left adjoint) a monad isomorphism can be constructed if and only if there is a lifting of the adjunction to an adjunction between the respectively Kleisli categories. The result, which establishes a bijection between monad isomorphisms and liftings of adjunctions, can be derived from the more familiar fact that liftings of functors to Kleisli categories correspond to monad opfunctors.

Once this categorical lemma is established the proof focuses on the equivalence of (i) and (ii). Certainly any geometric morphism $f$ gives rise to a pullback adjunction $\Sigma_f \dashv f^*:\mathbf{Loc}_{\mathcal{F}}\pile{\rTo \\ \lTo} \mathbf{Loc}_{\mathcal{E}}$, see \cite{Elephant} where the notation $f_!$ is used rather than $\Sigma_f$. The first part of the proof concerns itself with showing that $\Sigma_f \dashv f^*$ satisfies the conditions of (ii). In fact it is well known that $f^*$ preserves finitary coproduct and commutes with the double locale monad so what is new is a verification that the strength is preserved. This is done by giving an explicit description of the monad isomorphism $\phi$ (equivalently the Kleisli lifting of $\Sigma_f \dashv f^*$) which witnesses that $f^*$ commutes with the double power monad. This explicit description exploits the fact that $\Sigma_f \dashv f^*$ satisfies Frobenius reciprocity (\cite{towgeom}) and so allows us to construct a lifting to Kleisli categories, the details of which are in fact shown in \cite{towgeom}. With this explicit description, preservation of the strength is an immediate application of naturality since it can be seen that preservation of the strength can be seen as preservation of an exponential in $[\mathbf{Loc}^{op},\mathbf{Set}]$.

To complete the proof of the equivalence of the categories defined by (i) and (ii) the techniques deployed are those of \cite{towgeom}. However here the situation appears necessarily more complex as in the absence of Frobenius reciprocity we are not able to be as explicit as we would like about the behaviour of the lifting to Kleisli categories that is implied by the monad isomorphism. We proceed as follows: given an order-enriched adjunction between categories of locales and a double power locale monad isomorphism (that also preserves the strength, and has $R$ preserving finitary coproduct) we must construct a geometric morphism. The existence of the monad isomorphism implies that there is a lifting of the order-enriched adjunction to Kleisli categories. This lifting shows how to extend both $L^{op}$ and $R^{op}$ from frame homomorphisms to dcpo homomorphisms. Since the property of being discrete can be characterized in terms of open maps and open maps can be characterized in terms of certain dcpo homomorphisms it follows that the right adjoint preserves discrete locales. Because any topos embeds in its category of locales as the full subcategory of discrete locales we have a candidate for the inverse image of a geometric morphism. Defining its right adjoint (i.e. the direct image) hinges on the observation that every object in a topos can be described canonically as the equalizer of a pair of dcpo homomorphisms between frames. Since $L^{op}$ also extends to dcpo homomorphisms the required left adjoint can be defined as the equalizer of the image of this canonical equalizer. Then, by application of the assumption that $R$ preserves the strength, it is possible to check that a geometric morphism has been defined whose pullback adjunction is (isomorphic to) the original order-enriched adjunction. The shape of this representation theorem has already appeared in \cite{towgeom}.

Certainly the conditions of (ii) imply those of (iii) as (iii) is weaker. To prove that the conditions of (iii) imply those of (ii) we need to check that the exponentials $\alpha^X$ in $[\mathbf{Loc}^{op},\mathbf{Set}]$ are preserved for any natural transformation $\alpha$ corresponding to a Kleisli morphism of the double power locale monad (i.e. corresponding to a dcpo homomorphism between frames). It can be seen that the relevant exponentials can be described by change of base functors extended to Kleisli categories (\cite{towdcpo}) and so the proof becomes about showing that $R$ commutes with change of base functors when lifted to Kleisli categories. This is done by providing an external description of the dcpo homomorphisms in each slice (i.e. dcpo homomorphisms that exist after change of base to $Sh(Y)$, the topos of sheaves over $Y$ for any locale $Y$). The external description is given in terms of weak triquotient assignments and quite a bit of background material on these maps is required to proceed.

The proof of (iii) implies (iv) follows from the definitions of the lower and upper power locale monads ($P_L$ and $P_U$). Their Kleisli categories have as morphisms reversed suplattice and preframe homomorphisms respectively. By (iii) both $L^{op}$ and $R^{op}$ lift to functors that preserve dcpo homomorphisms and so it needs to be verified that these liftings also preserves the property of being a join or meet semilattice homomorphism. For $L^{op}$ this is immediate as $L$ preserves coproduct as it is a left adjoint. $R$ on the other hand preserves finitary coproduct by assumption. The liftings that witness that $R$ commutes with $\mathbb{P}$ therefore also serve to witness that $R$ commutes with both $P_L$ and $P_U$. Checking that the canonical distribution is also preserved is then a matter of unwinding the definition of the distribution in terms of universal suplattice and universal preframe homomorphisms. Since both classes of universal maps are preserved by the liftings it follows that the distribution is preserved. The proof in the other direction essentially follows from the definitions since we have $P_L P_U \cong \mathbb{P} $. The proof requires a certain amount of diagram chasing. To prove that $R$ preserves finitary coproduct in effect we recall that $P_L$ takes binary coproduct to product (preserved by $R$) and further that $P_L$ reflects isomorphisms.

\section{Functors between categories, each with an order-enriched monad}

We start by proving a general categorical result (Lemma \ref{adjunction lemma}) which will be used repeatedly in the paper as a way of characterizing when monad isomorphisms exist.

A monad on an order-enriched category is said to be \emph{order-enriched} if its functor part is. If $ \mathcal{C}$ and $\mathcal{D}$ are two order-enriched categories, each with a monad, $\mathbb{T}_{C}= (T_{\mathcal{C}},\eta ^{\mathcal{C}},\mu ^{\mathcal{C}})$ and $ \mathbb{T}_{D} = (T_{\mathcal{D}},\eta ^{\mathcal{D}},\mu ^{\mathcal{D}})$, then a \emph{monad opfunctor} from $(\mathcal{C},\mathbb{T}_{C}) $ to $ (\mathcal{D},\mathbb{T}_{D})$ is a pair $(F,\phi)$ where $F:\mathcal{C} \rTo \mathcal{D}$ is an order-enriched functor and $\phi: F T_{\mathcal{C}} \rTo T_{\mathcal{D}}F $ is a natural transformation such that the diagrams

\begin{diagram}
&  & F \\
& \ldTo^{F\eta ^{\mathcal{C}}}& \dTo^{\eta_F ^{\mathcal{D}}} \\
F T_{\mathcal{C}} & \rTo^{\phi } & T_{\mathcal{D}}F\\
\end{diagram}
and
\begin{diagram}
FT_{\mathcal{C}}T_{\mathcal{C}} & \rTo{ \phi_{T_{\mathcal{C} }}} & T_{\mathcal{D}}FT_{\mathcal{C}} & \rTo^{T_{\mathcal{D}}\phi } & T_{\mathcal{D}}T_{\mathcal{D}}F \\
\dTo^{F\mu ^{\mathcal{C}}} &  &  &  & \dTo^{\mu_F ^{\mathcal{D}}}
\\
FT_{\mathcal{C}} &  & \rTo{\phi } &  & T_{\mathcal{D}}F
\end{diagram}
both commute.

Note that monad opfunctors compose in an obvious manner: given $(F,\phi):(\mathcal{C},\mathbb{T}_{C}) \rTo (\mathcal{D},\mathbb{T}_{D})$ and $(G,\psi):(\mathcal{D},\mathbb{T}_{D}) \rTo (\mathcal{E},\mathbb{T}_{\mathcal{E}})$ the composition $(G,\psi)\circ (F,\phi)$ is given by $(G \circ F, GF T_{\mathcal{C}} \rTo{G\phi} GT_{\mathcal{D}}F \rTo{ \psi_F} T_\mathcal{E}GF)$. Further a transformation between monad opfunctors $(F,\phi)$ and $(F', \phi')$ can be defined as a natural transformation $\alpha: F \rTo F'$ such that $\phi' \alpha_{T_{\mathcal{C}}} = (T_{\mathcal{D}} \alpha )\phi$ so a 2-category whose objects are order-enriched categories with a monad is defined.

The following result gives an alternative description of this 2-category in terms of \emph{liftings} to Kleisli categories: if $F:\mathcal{C}\rTo\mathcal{D}$ is an order-enriched functor then a \emph{lifting} of $F$ is an order-enriched functor $\overline{F}:\mathcal{C}_{\mathbb{T}_{\mathcal{C}}} \rTo   \mathcal{D}_{\mathbb{T}_{\mathcal{D}}}$ such that the square

\begin{diagram}
\mathcal{C}_{\mathbb{T}_{\mathcal{C}}} & \rTo^{\overline{F}} & \mathcal{D}_{\mathbb{T}_{\mathcal{D}}} \\
\uTo^{\mathbb{T}_{\mathcal{C}}} &  & \uTo^{\mathbb{T}_{\mathcal{D}}} \\
\mathcal{C} & \rTo^{F} & \mathcal{D}
\end{diagram}
commutes.
Liftings compose in an obvious manner. The data for a transformation between liftings $(F,\overline{F})$ and $(F', \overline{F'})$ is a pair of natural transformations $\alpha: F \rTo F'$ and $\overline{\alpha}: \overline{F} \rTo \overline{F'}$ such that $\mathbb{T}_{\mathcal{D}}\alpha = \overline{\alpha}_{\mathbb{T}_{\mathcal{C}}}$.

\begin{lemma}
\label{basic adjunction lemma}Given two order-enriched monads $(T_{\mathcal{C}},\eta ^{\mathcal{C}},\mu ^{\mathcal{C}})$ and $(T_{\mathcal{D}},\eta ^{\mathcal{D}},\mu ^{\mathcal{D}})$ there is a bijection between monad opfunctors $(F,\phi)$ from $(\mathcal{C},\mathbb{T}_{C})$ to $ (\mathcal{D},\mathbb{T}_{D})$ and pairs of functors $(F,\overline{F})$ where $\overline{F}$ is a lifting of $F$. This bijection sends a lifting $(F, \overline{F})$ to the monad opfunctor $(F, \phi)$ with $\phi_X =\overline{F}(Id_{T_{\mathcal{C}} X}: T_{\mathcal{C}} X \rTo    X)$, and sends a monad opfunctor $(F,\phi)$ to the lifting $(F, \overline F)$ where $\overline F = F$ on objects and $\overline{F}(f:X \rTo Y) = \phi_Y F(f)$ on morphisms.
Furthermore this bijection preserves composition of monad opfunctors and extends to monad opfunctor transformations; i.e. there is a 2-categorical isomorphism.
\end{lemma}

\begin{proof}
\cite{Ross} (though also see \cite{Pumplun}). The result is a routine application of categorical definitions. The order enrichment aspects are entirely trivial. \cite{Ross} contains the relationship between monad functors and monad algebra morphisms; it also contains a duality which clarifies that Kleisli constructions are dual to monad algebra constructions.
\end{proof}

\begin{corollary}\label{basic_adjunction_corollary}
If $F_1,F_2: \mathcal{C} \rTo \mathcal{D}$ are two order-enriched functors and there is a monad opfunctor $\phi_1 : F_1 T_{\mathcal{C}} \rTo T_{\mathcal{D}} F_1 $ and a natural isomorphism $\rho: F_1 \rTo^{\cong} F_2$ then there is another monad opfunctor $\phi_2 : F_2 T_{\mathcal{C}} \rTo T_{\mathcal{D}} F_2 $ given by $\phi_2 = (T_{\mathcal{D}}\rho)\phi_1 \rho^{-1}_{T_{\mathcal{C}}}$ and a natural isomorphism $\overline{\rho}: \overline{F_1} \rTo^{\cong} \overline{F_2}$ given by $\overline{\rho}_X =F_1(X) \rTo^{\rho_X} F_2 (X) \rTo^{\eta^{\mathcal{D}}_{F_2(X)}} T_{\mathcal{D}} F_2(X)$ for each object $X$ of $\mathcal{C}$
\end{corollary}
\begin{proof}
It is routine to check that $(F_2,\phi_2)$ so defined is a monad opfunctor and that with this definition $\rho :  (F_1,\phi_1) \rTo (F_2,\phi_2)$ is a monad opfunctor transformation. $\overline{\rho}$ is the corresponding lifting (i.e. the image of the 2-cell under the 2-categorical isomorphism of the lemma).
\end{proof}

Next, by double application of the lemma, we show that if a functor between categories (each with a monad) has a left adjoint, then the adjunction lifts to the respective Kleisli categories if and only if there is a monad opfunctor on the right adjoint which is an isomorphism.

\begin{lemma}
\label{adjunction lemma}The following data on an order-enriched adjunction $L\dashv R:%
\mathcal{D}\pile{\rTo \\ \lTo} \mathcal{C}$ and order-enriched monads $(T_{%
\mathcal{D}},\eta ^{\mathcal{D}},\mu ^{\mathcal{D}})$ $\ $and $(T_{\mathcal{C%
}},\eta ^{\mathcal{C}},\mu ^{\mathcal{C}})$ on $\mathcal{D}$ and $\mathcal{C}
$ is equivalent:

(i) a monad isomorphism $\phi :RT_{\mathcal{C}} \rTo T_{\mathcal{D}}R$

(ii) an adjunction $\overline{L}\dashv \overline{%
R}:\mathcal{D}_{\mathbb{T}_{\mathcal{D}}}\pile{\rTo \\ \lTo} \mathcal{C}_{%
\mathbb{T}_{\mathcal{C}}}$ that lifts $L \dashv R$.
\end{lemma}
Note that when, in (ii), it says that an adjunction is a lifting of another adjunction, this is implying that not only are there two liftings $\overline{R}$ and $\overline{L}$ but also that the unit and counit of the lifted adjunction are the liftings of the unit and counit of the original adjunction $L \dashv R$; i.e. the lifting is 2-categorical.
\begin{proof}
Firstly let us say that we have a monad isomorphism $\phi :RT_{\mathcal{C}} \rTo T_{\mathcal{D}}R$. Then certainly there exists $\overline{R}$ by the previous lemma. But also we can construct a monad opfunctor $(L,\phi')$ by defining $\phi'_W: L T_{\mathcal{D}}W \rTo T_{\mathcal{C}} LW$ to be the adjoint transpose (under $L \dashv R$) of $T_{\mathcal{D}}W \rTo^{T_{\mathcal{D}}\eta_W}T_{\mathcal{D}}RLW \rTo^{\phi^{-1}_{LW}}R T_{\mathcal{C}}LW$. By application of the previous lemma we therefore also have $\overline{L}:\mathcal{D}_{\mathbb{T}_{\mathcal{D}}} \rTo \mathcal{C}_{\mathbb{T}_{\mathcal{C}}}$. The unit $\eta : Id \rTo RL$ and counit $ \epsilon : LR \rTo Id$ are, it can be verified, monad opfunctor transformations and so $(L,\phi') \dashv (R,\phi)$ since the triangular identities hold. Therefore since the previous lemma was 2-categorical it effectively shows how to construct two natural transformations $\overline{\eta} : Id \rTo \overline{R}\overline{L}$ and $ \overline{\epsilon} : \overline{L}\overline{R} \rTo Id$ and therefore a lifting $\overline{L}\dashv \overline{R}:\mathcal{D}_{\mathbb{T}_{\mathcal{D}}}\pile{\rTo \\ \lTo} \mathcal{C}_{\mathbb{T}_{\mathcal{C}}}$ as required. \\
In the other direction, assume that we are given such a lifting. Then let $(R,\phi)$ be the monad opfunctor that exists due to the lifting $\overline R$; i.e. $\phi_X:R T_{\mathcal{C}}X \rTo T_{\mathcal{D}} R X = \overline R (Id_{T_{\mathcal{C}}X})$ for any object $X$ of $\mathcal{C}$. We need to construct $\phi^{-1}: T_{\mathcal{D}}R \rTo R T_{\mathcal{C}}$. Consider $\psi : T_{\mathcal{D}}R \rTo R T_{\mathcal{C}}$ defined by $\psi_X$ = the adjoint transpose via $L \dashv R$ of $L T_{\mathcal{D}}R X \rTo^{\phi'_{RX}} T_{\mathcal{C}} LRX \rTo^{T_{\mathcal{C}}\epsilon_X} T_{\mathcal{C}}X$ where $\phi' : L T_{\mathcal{D}} \rTo T_{\mathcal{C}} L$ is from the monad opfunctor $(L,\phi')$ which is derived from the lifting $\overline L$. Then use the triangular identities on $\overline L \dashv \overline R$ to show that $\psi = \phi^{-1}$.

\end{proof}

\begin{corollary}\label{Rights_are_same}
Given two order-enriched adjunctions $L_i\dashv R_i : \mathcal{D}\pile{\rTo \\ \lTo} \mathcal{C}$ $i=1,2$ such that $L_1 = L_2$ and so $R_1 \cong R_2$ via a canonical natural isomorphism $\rho$. If both adjunctions have a monad isomorphism $\phi_i:R_i T_{\mathcal{C}} \rTo T_{\mathcal{D}} R_i$ ($i=1,2$) then provided $\overline{L_1}=\overline{L_2}$, it follows that $\overline{R_1} \cong \overline{R_2}$ via $\overline{\rho}$ where $\overline{\rho}$ is given by $\overline{\rho}_X =R_1(X) \rTo^{\rho_X} R_2 (X) \rTo^{\eta^{\mathcal{D}}_{R_2 (X)}} T_{\mathcal{D}} R_2(X)$.
\end{corollary}
\begin{proof}
By Corollary \ref{basic_adjunction_corollary} it is sufficient to prove that $\phi_2 = (T_{\mathcal{D}}\rho)\phi_1 \rho^{-1}_{T_{\mathcal{C}}}$; we outline a proof that $\phi^{-1}_2 =  \rho_{T_{\mathcal{C}}}\phi^{-1}_1 (T_{\mathcal{D}}\rho^{-1})$ from which this follows. Since $\overline{L_1}=\overline{L_2}$ we have that the corresponding monad opfunctors are the same by Lemma \ref{basic adjunction lemma}, i.e. $\phi'_1 : L_1 T_{\mathcal{D}} \rTo T_{\mathcal{C}}L_1$ is equal to $\phi'_2 : L_2 T_{\mathcal{D}} \rTo T_{\mathcal{C}}L_2$. However the last lemma has established that $\phi^{-1}_i$ is uniquely determined by $\phi'_i$ as it is the adjoint transpose via $L_i \dashv R_i$ of $(T_{\mathcal{C}}\epsilon_i)(\phi'_i)_R$ for $i=1,2$. It just therefore remains to check that

\begin{eqnarray*}
T_{\mathcal{D}} R_2 \rTo^{(\eta_2)_{T_{\mathcal{D}}R_2} } R_2 L_2 T_{\mathcal{D}}R_2 \rTo^{R_2 (\phi'_2)_{R_2}} R_2 T_{\mathcal{C}} L_2 R_2 \rTo^{R_2 T_{\mathcal{C}}(\epsilon_2)} R_2 T_{\mathcal{C}}\\
\end{eqnarray*}
factors as $\rho_{T_{\mathcal{C}}}\phi^{-1}_1 (T_{\mathcal{D}}\rho^{-1})$ which follows by naturality since $\phi^{-1}_1$ factors as $(R_1 T_{\mathcal{C}}(\epsilon_1)) R_1(\phi'_1)_{R_1} (\eta_1)_{T_{\mathcal{D}}R_1}$ and $\phi'_1=\phi'_2$.
\end{proof}

\section{Preserving the strength}

In this section we prove a proposition (Proposition \ref{frobenius implies commutes with PP} below) which will be a key step in proving that the categories (i) and (ii) of the main theorem (Theorem \ref{maintheorm}) are equivalent. The proof of this proposition will be by application of the last lemma (Lemma \ref{adjunction lemma}). The proposition shows that the pullback adjunction $\Sigma_f \dashv f^*:
\mathbf{Loc}_{\mathcal{F}}\pile{\rTo \\ \lTo} \mathbf{Loc}_{\mathcal{E}}$ which arises from a geometric morphism $f: \mathcal{F} \rTo \mathcal{E}$ satisfies the conditions of (ii) of the main theorem: $f^*$ preserves finitary coproduct, there exists a monad isomorphism $\phi: f^* \mathbb{P}_{\mathcal{E}} \rTo^{\cong} \mathbb{P}_{\mathcal{F}}f^*$ and $f^*$ preserves the strength on $\mathbb{P}$ up to the monad isomorphism $\phi$.   In fact this is all known except for the assertion that the strength is preserved. That $f^{*}$ preserves finitary coproduct is well known (for example, covered in the proof of Proposition 24 of \cite{towdcpo}) and the fact that $f^*$ preserves $\mathbb{P}$ is known (e.g. \cite{DoubPt}); that, further, the whole monad structure is preserved follows from Lemma \ref{adjunction lemma} since Proposition 24 of \cite{towdcpo} exhibits a lifting of $\Sigma_f \dashv f^*$ to the Kleisli categories.

Our proof on the preservation of the strength will make use of the result, \cite{towgeom}, that $\Sigma_f \dashv f^*$ satisfies Frobenius reciprocity. To proceed we must now set out clearly all the required definitions:
\begin{definition}
An adjunction $L\dashv R:\mathcal{D}\pile{\rTo \\ \lTo} \mathcal{C}$ between
cartesian categories satisfies \emph{Frobenius reciprocity} provided the map
$L(R(X)\times W)\rTo^{(L\pi _{1},L\pi _{2})}LRX\times LW\rTo^{\varepsilon _{X}\times Id_{LW}}X\times LW$ is an isomorphism for all objects $W$ and $X$ of $\mathcal{D}$
and $\mathcal{C}$ respectively.
\end{definition}
\begin{definition}
The \emph{double power monad} on $\mathbf{Loc}$ is given by double exponentiation at the Sierpi\'{n}ski locale $\mathbb{S}$. The functor part is $X \mapsto \mathbb{S}^{\mathbb{S}^X}$. The unit and multiplication parts of the monad are immediate from the definition of the functor part as an exponential.
\end{definition}
It is important to be aware that ${\mathbb{S}^X}$ is the presheaf $\mathbf{Loc}(\_ \times X , \mathbb{S})$ and that the exponentiation takes place in $[\mathbf{Loc}^{op}, \mathbf{Set}]$. See \cite{victow} for the relevant background. Consequently the Kleisli category of the double power locale monad is equivalent to the opposite of the full subcategory of $[\mathbf{Loc}^{op}, \mathbf{Set}]$ consisting only of objects of the form $\mathbb{S}^X$. We use $\boxtimes_X:\mathbb{S}^X \rTo \mathbb{S}^{\mathbb{P}X}$ as notation for the universal natural transformation (i.e. the double exponential transpose of the map $\mathbb{P}X \rTo^{\cong} \mathbb{S}^{\mathbb{S}^X}$). This natural transformation is universal in the sense that every natural transformation $\alpha: \mathbb{S}^Y \rTo \mathbb{S}^X$ factors as $\mathbb{S}^{f_{\alpha}} \boxtimes_Y:\mathbb{S}^Y \rTo \mathbb{S}^X$ for some unique Kleisli map $f_{\alpha}: X \rTo \mathbb{P}Y$. It is worth clarifying, since this exponential relative to $[\mathbf{Loc}^{op}, \mathbf{Set}]$ takes a central role in what follows, that if $\alpha: \mathbb{S}^Y \rTo \mathbb{S}^X$ is a natural transformation then for any locale $Z$ the exponential map $\alpha^Z: \mathbb{S}^{Z \times Y} \rTo \mathbb{S}^{ Z \times X }$ exists; it is given by $\alpha^Z_{X'}=\alpha_{X' \times Z }$ at any locale $X'$.
In the case that there exists $\phi$ a monad opfunctor between double power locale monads (for a given functor $R: \mathbf{Loc}_{\mathcal{E}} \rTo \mathbf{Loc}_{\mathcal{F}}$) we have that the lifting preserves the universal natural transformation up to $\phi$:
\begin{lemma}
\label{phi_as_ext}
Given an order-enriched functor $R: \mathbf{Loc}_{\mathcal{E}} \rTo  \mathbf{Loc}_{\mathcal{F}}$ and a monad opfunctor $\phi: R \mathbb{P}_{\mathcal{E}} \rTo \mathbb{P}_{\mathcal{F}} R$, the following diagram commutes:
\begin{diagram}
&                                                                 & \mathbb{S}_{\mathcal{F}}^{ \mathbb{P}_{\mathcal{F}} R X  }\\
 & \ruTo^{\boxtimes_{R X}} & \dTo_{\mathbb{S}_{\mathcal{F}}^{ \phi_X}} \\
\mathbb{S}_{\mathcal{F}}^{R X} & \rTo^{\overline{R}^{op}\boxtimes_X} & \mathbb{S}_{\mathcal{F}}^{R\mathbb{P}_{\mathcal{E}} X}   \\
 \end{diagram}
where $\overline{R}$ is the lifting corresponding to $\phi$.
\end{lemma}
\begin{proof}
Recall (Lemma \ref{basic adjunction lemma}) that for a Kleisli morphism $f: X \rTo \mathbb{P}_{\mathcal{E}}Y$, $\overline{R}f$ is defined to be $\phi_Y Rf$. The result then follows since the Kleisli morphism corresponding to $\boxtimes_X$ is the identity on $\mathbb{P}_{\mathcal{E}}X$.
\end{proof}
\begin{definition}
(1). The \emph{strength} of the double power monad is given by the natural transformation $t:\mathbb{P}(\_)\times (\_) \rTo \mathbb{P}(\_ \times \_)$ defined by
\begin{eqnarray*}
t_{X,Y}: \mathbb{P} X \times Y \rTo \mathbb{P}(X \times Y)\
\end{eqnarray*}
for any locales $X$ and $Y$, where $t_{X,Y}$ is the Kleisli morphism corresponding to the exponential $\boxtimes_X^Y:\mathbb{S}^{X \times Y} \rTo \mathbb{S}^{\mathbb{P}X \times Y}$ in $[\mathbf{Loc}^{op},\mathbf{Set}]$.

(2). For any order-enriched functor $R: \mathbf{Loc}_{\mathcal{F}} \rTo \mathbf{Loc}_{\mathcal{E}}$ with a monad opfunctor $\phi: R \mathbb{P}_{\mathcal{E}} \rTo \mathbb{P}_{\mathcal{F}} R$, \emph{$R$ preserves the strength up to $\phi$} if and only if the diagram
\begin{diagram}
R ( \mathbb{P}_{\mathcal{E}} X \times Y ) &   &  \rTo^{R t^{\mathcal{E}}_{X,Y}} &  & R\mathbb{P}_{\mathcal{E}}(X \times Y)   \\
\dTo^{(R \pi_1, R\pi_2)} &  & & &  \dTo_{\phi_{X\times Y}}\\
R\mathbb{P}_{\mathcal{E}}X \times R Y& & & & \mathbb{P}_{\mathcal{F}}(R(X \times Y))  \\
\dTo^{\phi_X \times Id_{RY}}  & & & &  \dTo_{\mathbb{P}_{\mathcal{F}}(R \pi_1, R\pi_2)}\\
\mathbb{P}_{\mathcal{F}}RX \times RY &  &  \rTo^{t^{\mathcal{F}}_{RX,RY}}  &   & \mathbb{P}_{\mathcal{F}}(RX \times RY)  \\
\end{diagram}
commutes for all locales $X$ and $Y$ over $\mathcal{E}$.
\end{definition}

The next lemma provides a characterization of when the strength is preserved in terms of the lifting $\overline{R}$ induced by $\phi$:

\begin{lemma}
\label{characterizepreservingstrength}Given a monad opfunctor $\phi: R \mathbb{P}_{\mathcal{E}} \rTo \mathbb{P}_{\mathcal{F}} R$ then the following are equivalent:

1. $R$ preserves the strength up to $\phi$,

2. for any locales $X$ and $Y$ over $\mathcal{E}$ the diagram

\begin{diagram}
 \mathbb{S}_{\mathcal{F}}^{R X \times R Y}  &  & \rTo^{ \mathbb{S}_{\mathcal{F}}^{ (R \pi_1, R \pi_2)}}  &  &    \mathbb{S}_{\mathcal{F}}^{R (X \times  Y)} \\
\dTo^{\boxtimes_{R X}^{RY}} & & &         &  \dTo_{\overline{R}^{op}\boxtimes_X^Y} \\
 \mathbb{S}_{\mathcal{F}}^{ \mathbb{P}_{\mathcal{F}}R X \times R Y} & \rTo^{\mathbb{S}_{\mathcal{F}}^{ \phi_X \times Id}} &    \mathbb{S}_{\mathcal{F}}^{R\mathbb{P}_{\mathcal{E}} X \times R Y}   & \rTo^{ \mathbb{S}_{\mathcal{F}}^{ (R \pi_1, R \pi_2)}}  & \mathbb{S}_{\mathcal{F}}^{R(\mathbb{P}_{\mathcal{E}} X \times  Y)}\\
\end{diagram}
commutes where $\overline{R}$ is the lifting corresponding to $\phi$; and,

3. for any natural transformation $\alpha: \mathbb{S}_{\mathcal{E}}^{X_1} \rTo \mathbb{S}_{\mathcal{E}}^{X_2}$ and for any locale $Y$ over $\mathcal{E}$, $ \overline{R}^{op}(\alpha^Y) \mathbb{S}_{\mathcal{F}}^{ (R \pi_1, R \pi_2)}= \mathbb{S}_{\mathcal{F}}^{ (R \pi_1, R \pi_2)}(\overline{R}^{op}\alpha)^{RY} $.

If further $R$ preserves binary product then the above is also equivalent to:

4. for any natural transformation $\alpha: \mathbb{S}_{\mathcal{E}}^{X_1} \rTo \mathbb{S}_{\mathcal{E}}^{X_2}$ and for any locale $Y$ over $\mathcal{E}$, $\overline{R}^{op}(\alpha^Y) \cong (\overline{R}^{op}\alpha)^{RY}$ via the natural isomorphism $\mathbb{S}_{\mathcal{F}}^{ (R \pi_1, R \pi_2)}$.
\end{lemma}

\begin{proof}
The equivalence of 1. and 2. is a question of taking the exponential transpose of the diagram that determines whether $R$ preserves the strength. For the equivalence of 2. and 3. recall that each natural transformation $\alpha: \mathbb{S}^Y \rTo \mathbb{S}^X$ factors as $\mathbb{S}^{f_{\alpha}} \boxtimes_Y:\mathbb{S}^Y \rTo \mathbb{S}^X$ for some unique Kleisli map $f_{\alpha}: X \rTo \mathbb{P}Y$. The equivalence of 3. and 4. is immediate.  \end{proof}
\begin{proposition}
\label{frobenius implies commutes with PP}
For any geometric morphism $f: \mathcal{F} \rTo \mathcal{E}$ the resulting pullback adjunction $\Sigma_f \dashv f^*:
\mathbf{Loc}_{\mathcal{F}}\pile{\rTo \\ \lTo} \mathbf{Loc}_{\mathcal{E}}$ has the properties (a) $f^*$ preserves finitary coproduct, (b) there exists a monad isomorphism $\phi: f^* \mathbb{P}_{\mathcal{E}} \rTo^{\cong} \mathbb{P}_{\mathcal{F}}f^*$ and (c) $f^*$ preserves the strength on $\mathbb{P}$ up to the monad isomorphism $\phi$.
\end{proposition}
\begin{proof}
As has been covered already in the first paragraph of this Section, all that is required is a proof of (c). To prove (c) we need to be explicit about how $\phi$ is constructed. It is well known that there is an isomorphism $f^*  \mathbb{S}_{\mathcal{E}}\cong \mathbb{S}_{\mathcal{F}}$ (i.e. the Sierpi\'{n}ski locale is preserved by pullback along a geometric morphism) and, see \cite{towgeom}, it is known that $\Sigma_f \dashv f^*$ satisfies Frobenius reciprocity. From this we can apply Proposition 5.1 of \cite{towgeom} and construct a lifting $\overline{f^*} \dashv \overline{\Sigma_f}: (\mathbf{Loc}_{\mathcal{F}})_{\mathbb{P}_{\mathcal{F}}}\pile{\rTo \\ \lTo} (\mathbf{Loc}_{\mathcal{E}})_{\mathbb{P}_{\mathcal{E}}}$ to the Kleisli categories and so this gives rise to a monad isomorphism $\phi$. For example if $\alpha :\mathbb{S}_{\mathcal{E}}^{X}\rTo \mathbb{S}_{\mathcal{E}}^{X^{\prime }}$ is a natural transformation, define $\overline{f^*}^{op}(\alpha )$ by
\begin{diagram}
\mathbf{Loc}_{\mathcal{F}}(W\times f^*X,\mathbb{S}_{\mathcal{F}}) & \rTo^{[\overline{f^*}^{op}(\alpha )]_{W}} & \mathbf{Loc}_{\mathcal{F}}(W\times f^*X^{\prime },\mathbb{S}_{\mathcal{F}}) \\
\dTo_{\cong} &  & \uTo^{\cong} \\
\mathbf{Loc}_{\mathcal{F}}(W\times f^*X,f^*\mathbb{S}_{\mathcal{E}}) &  & \mathbf{Loc}_{\mathcal{F}}(W\times f^*X^{\prime },f^* \mathbb{S}_{\mathcal{E}}) \\
\dTo_{\cong} &  & \uTo^{\cong} \\
\mathbf{Loc}_{\mathcal{E}}(\Sigma_f W\times X,\mathbb{S}_{\mathcal{E}}) & \rTo{\alpha _{\Sigma_f W}} & \mathbf{Loc}_{\mathcal{E}}(\Sigma_f W\times X^{\prime}, \mathbb{S}_{\mathcal{E}})\\
\end{diagram}
for any locale $W$ of $\mathcal{F}$, where the vertical morphisms are isomorphisms by our observation that Frobenius reciprocity is satisfied and $f^*$ preserves the Sierpi\'{n}ski locale.
To prove that $f^*$ preserves the strength we show that for any natural transformation $\alpha: \mathbb{S}_{\mathcal{E}}^{X_1} \rTo \mathbb{S}_{\mathcal{E}}^{{X_2}}$, $\overline{f^*}^{op}(\alpha^Y)  \cong (\overline{f^*}^{op}\alpha)^{RY}$ and appeal to the previous lemma. Now, for any locale $W$ over $\mathcal{F}$ we have that $[\overline{f^*}^{op}(\alpha^Y)]_W$ is the composite
\begin{eqnarray*}
{\mathbf{Loc}}_{\mathcal{F}}(f^*(X_1 \times Y) \times W, \mathbb{S}_{\mathcal{F}}) & \rTo^{\cong} &  \\
{\mathbf{Loc}}_{\mathcal{E}}(X_1 \times Y \times \Sigma_f W, \mathbb{S}_{\mathcal{E}})
& \rTo^{\alpha_{Y \times \Sigma_f W}} & {\mathbf{Loc}}_{\mathcal{E}}(X_2 \times Y \times \Sigma_f W, \mathbb{S}_{\mathcal{E}}) \\
& \rTo^{\cong} & {\mathbf{Loc}}_{\mathcal{F}}(f^*(X_2 \times Y) \times W, \mathbb{S}_{\mathcal{F}})
\end{eqnarray*}
and $(\overline{f^*}^{op}\alpha)_W^{f^*Y}$ is the composite,
\begin{eqnarray*}
{\mathbf{Loc}}_{\mathcal{F}}(f^*X_1 \times f^*Y \times W, \mathbb{S}_{\mathcal{F}}) & \rTo^{\cong}  \\ {\mathbf{Loc}}_{\mathcal{E}}(X_1 \times \Sigma_f( f^* Y \times W), \mathbb{S}_{\mathcal{E}})
& \rTo^{\alpha_{\Sigma_f(f^*Y \times W)}} & {\mathbf{Loc}}_{\mathcal{E}}(X_2 \times \Sigma_f ( f^* Y \times W), \mathbb{S}_{\mathcal{E}}) \\  { } &  \rTo^{\cong} & {\mathbf{Loc}}_{\mathcal{F}}(f^*X_2 \times f^*Y \times W, \mathbb{S}_{\mathcal{F}})
\end{eqnarray*}
and so $\overline{f^*}^{op}(\alpha^Y) \cong (\overline{f^*}^{op}\alpha)^{f^*Y}$ via the natural isomorphism $\mathbb{S}_{\mathcal{F}}^{ (f^* \pi_1, f^* \pi_2)}$ by application of naturality of $\alpha$ at the isomorphism $\Sigma_f(f^*Y \times W) \cong Y \times \Sigma_f W$.
\end{proof}
\section{Representing an adjunction between categories of locales using a geometric morphism}
In this section we prove that if we are given an order-enriched adjunction $L\dashv R:
\mathbf{Loc}_{\mathcal{F}}\pile{\rTo \\ \lTo} \mathbf{Loc}_{\mathcal{E}}$ and a monad isomorphism $\phi:R\mathbb{P}_{\mathcal{E}}\cong \mathbb{P}_{\mathcal{F}}R$ such that (a) $R$ preserves finitary coproduct and (b) $R$ preserves, up to the monad isomorphism $\phi$, the strength on $\mathbb{P}$, then we can construct a geometric morphism $f: \mathcal{F} \rTo \mathcal{E}$, unique up to natural isomorphism, such that $R \cong f^*$ and $L \cong \Sigma_f$. This construction (Proposition \ref{represent_geom_morphism}) combined with the previous proposition (Proposition \ref{frobenius implies commutes with PP}) proves the equivalence of the categories (i) and (ii) of the main theorem (Theorem \ref{maintheorm}).

The next three lemmas set out various lattice theoretic results needed in the proof of Proposition \ref{represent_geom_morphism}. We start by setting notation related to dcpo homomorphisms between frames. If $q:\Omega X_1 \rTo \Omega X_2$ is a dcpo homomorphism then it corresponds to a natural transformation $\mathbb{S}^{X_1} \rTo \mathbb{S}^{X_2}$ (\cite{victow}). If $Y$ is some other locale then we use $q^Y$ to denote the dcpo homomorphism $\Omega(Y \times X_1 ) \rTo \Omega (Y \times X_2 )$ that is equal to the natural transformation corresponding to $q$ evaluated at $Y$. This notation is consistent with our description of the exponential $\alpha^Y$ in $[\mathbf{Loc}^{op}, \mathbf{Set}]$ since the dcpo homomorphism corresponding to $\alpha^Y$ is $q^Y$ if $q$ is the dcpo homomorphism corresponding to $\alpha$. Note that if there is $R:\mathbf{Loc}_{\mathcal{E}} \rTo \mathbf{Loc}_{\mathcal{F}}$ and a monad morphism $\phi:R\mathbb{P}_{\mathcal{E}}\rTo  \mathbb{P}_{\mathcal{F}}R$, then we will also use the notation $\overline{R}^{op}(q)$ to denote the effect of $R$ on the dcpo homomorphisms corresponding to natural transformations. In other words no distinction is going to be made between $\overline{R}^{op}$ acting on natural transformations and $\overline{R}^{op}$ acting on dcpo homomorphisms. Note that by Lemma \ref{characterizepreservingstrength} if further $R$ preserves binary product and the strength (up to $\phi$) then $\overline{R}^{op}(q^Y) \cong (\overline{R}^{op}q)^{RY}$ via the canonical isomorphism $\Omega_{\mathcal{F}}(R(\_) \times R(\_)) \cong \Omega_{\mathcal{F}}(R(\_ \times \_))$

The following lemma allows us to capture maps to frames as dcpo homomorphisms. Since the property of commuting with the double power locale monad will only give us information about the preservation of dcpo homomorphisms this is a key technical step.
\begin{lemma}\label{omegaXtoA}
For any set $A$ and locale $X$ over a topos $\mathcal{E}$ there is an order isomorphism
\begin{eqnarray*}
\mathcal{E}(A,\Omega X)\cong\mathbf{dcpo}(\Omega 0,\Omega( A \times X))
\end{eqnarray*}
natural in dcpo homomorphisms between $\Omega X$ and functions between $A$.
\end{lemma}
If we follow a notation that $\psi' : \Omega 0 \rTo \Omega (A \times X)$ is the mate of $\psi : A \rTo \Omega X$ under this order isomorphism then the naturality assertion with respect to dcpo homomorphisms is that for any $q: \Omega X_1 \rTo \Omega X_2$ then $(q \psi)'= q^A \psi'$. The assertion of naturality with respect to functions is that if $f: B \rTo A$ is a map then $( \psi f)'= [\Omega(f)]^X \psi'$. Note that in this case $[\Omega(f)]^X$ is the suplattice tensor $\Omega(f) \otimes Id_{\Omega(X)}$; this can be seen by the construction of the natural transformation representation of any suplattice homomorphism (Theorem 23 in \cite{victow}).
\begin{proof}
This result, without the naturality statement, is well known lattice theory (since, of course, $\Omega 0=1$). The point of $\Omega (A \times X)$ corresponding to $\psi: A \rTo \Omega X$ is given by $\bigvee_{a \in A} \{a \} \otimes \psi a$. For the naturality assertions consult Lemmas 51 and 54 of \cite{towdcpo}.
\end{proof}
We use the notation $\overline{\psi}: PA \rTo \Omega X$ for the free suplattice homomorphism on $\psi: A \rTo \Omega X$. Below we will need an explicit description of $\overline{\psi}$ in terms of $\overline{\psi'}$ to prove the main proposition of this section. This description is provided by the next lemma:
\begin{lemma}
\label{psi_bar_explicit}
Given $\psi: A \rTo \Omega X$, $\overline{\psi}: PA \rTo \Omega X$ factors as
\begin{eqnarray*}
PA \rTo^{\cong} \Omega ( A \times 1 )  \rTo^{{\overline{\psi'}}^A} \Omega( A \times A \times  X )\rTo^{\Omega(\Delta_A \times Id_X)} \Omega (A \times X)    \rTo^{\exists_{\pi_2}} \Omega X
\end{eqnarray*}
\end{lemma}
\begin{proof}
Note first that ${\overline{\psi'}}^A$ is $\overline{\psi'} \otimes Id_{PA}$. Since $PA$ is the free suplattice on $A$ we just need to check that $\psi(a)$ is equal to $  \exists_{\pi_2}\Omega (\Delta_A \times Id_X) ( \{a \} \otimes (\bigvee_{a' \in A} \{a' \} \otimes \psi a' ) ) =  \exists_{\pi_2} (  \{a \} \otimes \psi a ) )$. This is immediate since for any discrete locale $B$ and any other locale $Y$ the map $\exists_{\pi_1}: PB  \otimes \Omega Y \rTo \Omega Y$ is given by $ \exists_{\pi_1}(I \otimes c) = \bigvee_{\exists * \in I} c$.
\end{proof}
Below we will need to specialize the order isomorphism of Lemma \ref{omegaXtoA} to the case that $A$ is a poset. The specialization is provided by the next lemma the proof of which hinges on the explicit description of $\overline{\psi}$ given in the previous lemma.
\begin{lemma}
\label{monotone}
For any poset $A$ and locale $X$ there is an order isomorphism between order preserving (i.e. monotone) maps $A \rTo \Omega X$ and
\begin{eqnarray*}
\{ \psi': \Omega 0 \rTo \Omega (A \times X) | (\uparrow_A)^X \psi' = \psi' \}
\end{eqnarray*}
where $\uparrow_A: PA \rTo PA$ is the upper closure operator with respect to the partial order on $A$.
\end{lemma}
Note that $\uparrow_A$ is a suplattice homomorphism, so it is a dcpo homomorphism and so $(\uparrow_A)^X$ is well defined; as above, it is the familiar suplattice tensor $\uparrow_A \otimes Id_{\Omega X}$.
\begin{proof}
We must check that $\psi : A \rTo \Omega X$ is a monotone map if and only if $(\uparrow^A)^X \psi' = \psi'$, i.e. if and only if
\begin{eqnarray*}
\bigvee_{b \geq a} \{b\} \otimes \psi a =  \bigvee_{a' \in A} \{a'\} \otimes \psi a'   \text{      ($\star$).}
\end{eqnarray*}
Certainly if $\psi$ is monotone then ($\star$) holds since $\{b \} \otimes \psi a \leq \{b \} \otimes \psi b$ for any $ b \geq a$. Conversely if ($\star$) holds then by applying the previous lemma we have that $  \exists_{\pi_2}\Omega (\Delta_A \times Id_X) ( \{a' \} \otimes (\bigvee_{b \geq a} \{b \} \otimes \psi a ) ) = \psi (a')$ for each $a' \in A$. For any $b \geq a$ take $a' = b$ in this last, and note that the LHS is greater than or equal to $\psi a$.
\end{proof}
We now prove a series of lemmas about an order-enriched monad opfunctor $(R,\phi)$ with $R :\mathbf{Loc}_{\mathcal{E}} \rTo  \mathbf{Loc}_{\mathcal{F}}$ and $\phi:R\mathbb{P}_{\mathcal{E}}\rTo  \mathbb{P}_{\mathcal{F}}R$. These lemmas provide more information on what sort of structures are preserved by $R$ relative to the ambient toposes $\mathcal{E}$ and $\mathcal{F}$ given increasing assumptions about $R$ and $\phi$.
\begin{lemma}
\label{discretepreservationlemma}
Given an order-enriched functor $R:
\mathbf{Loc}_{\mathcal{E}} \rTo  \mathbf{Loc}_{\mathcal{F}}$ which preserves binary products and a monad opfunctor $\phi:R\mathbb{P}_{\mathcal{E}}\rTo  \mathbb{P}_{\mathcal{F}}R$, $R$ preserves discrete locales and so defines a functor $\mathcal{E} \rTo \mathcal{F}$.
\end{lemma}
\begin{proof}
Consult Proposition 5.2 of \cite{towgeom}, since in this case $\overline{R}^{op}$ exists and is order-enriched. The proof is done by checking that $R$ preserves open maps and it is shown that if $\exists_f$ is left adjoint to $\Omega_{\mathcal{E}}f$ witnessing that $f : Y \rTo X$ is open, then $\overline{R}^{op}(\exists_f)$ witnesses that $\Omega_{\mathcal{F}}Rf$ is open.
\end{proof}

If we further know that $R$ preserves all finitary limits (as it will do if it is a right adjoint) then it further must define a functor from the category of internal posets in $\mathcal{E}$ to the category of internal posets in $\mathcal{F}$ (i.e. $\mathbf{Pos}_{\mathcal{E}} \rTo \mathbf{Pos}_{\mathcal{F}}$). This is because the property of being an internal poset can be expressed using finitary limits, see e.g. Lemma 14 of \cite{towdcpo}. The next two lemmas provide more consequences of the further assumption that $R$ is cartesian. The first lemma verifies something that we would expect. For any open $a \rInto X$ there is a map $p_a:\Omega_{\mathcal{E}}0\rTo\Omega_{\mathcal{E}}X$ which is the point corresponding to $a$. If $R$ preserves the Sierpi\'{n}ski object then $Ra$ is an open of $RX$ since a subobject $a \rInto X$ (i.e. a regular monomorphism) in $\mathbf{Loc}_{\mathcal{E}}$ is open if and only if there is a pullback diagram
\begin{diagram}
a & \rTo & 1_{\mathcal{E}} \\
\dTo & & \dTo^{1} \\
X & \rTo^{\chi_a} & \mathbb{S}_{\mathcal{E}}
\end{diagram}
for some unique $\chi_a$, where $1:1_{\mathcal{E}}\rTo\mathbb{S}_{\mathcal{E}}$ is the top elements of $\mathbb{S}_{\mathcal{E}}$. Note that $p_a$ factors as $\Omega_{\mathcal{E}}\chi_a \boxtimes_{0_{\mathcal{E}}}$ since $\mathbb{S}_{\mathcal{E}}\cong\mathbb{P}_{\mathcal{E}}0_{\mathcal{E}}$. We would expect that $p_{Ra}$ is equal to $\overline{R}^{op}(p_a)$ if $R$ preserves $0_{\mathcal{E}}$:

\begin{lemma}
\label{pointpreserved}
If $R$ is as in the previous lemma and further is cartesian and preserves the initial locale then for any open $a \rInto X$ of $X$, some locale over $\mathcal{E}$, $p_{Ra}$ is equal to $\overline{R}^{op}(p_a)$ via the canonical isomorphism $\Omega_{\mathcal{F}}0_{\mathcal{F}} \cong \Omega_{\mathcal{E}}(R0_{\mathcal{E}})$.
\end{lemma}
\begin{proof}
$R$ preserves the Sierpi\'{n}ski object since $\mathbb{S}_{\mathcal{E}}\cong\mathbb{P}_{\mathcal{E}}0_{\mathcal{E}}$ and $R$ preserves $0_{\mathcal{E}}$. But $\overline{R}^{op}(\boxtimes_{0_{\mathcal{E}}}) = \Omega_{\mathcal{F}}\phi_X \boxtimes_{0_{\mathcal{F}}}$ (Lemma \ref{phi_as_ext}) and so the proof can be completed by naturality of $\phi$ since $R$ preserves the pullback square that defines $a \rInto X$.
\end{proof}

\begin{lemma}
\label{closure_preserved}
Given the conditions of the previous lemma together with the further assumption that $R$ preserves binary coproduct, then for $(A, \leq_A)$ an internal poset in $\mathcal{E}$ we have that $\overline{R}^{op}(\uparrow_A)= \uparrow_{RA}$.
\end{lemma}
Since $\uparrow_A$ is a suplattice endomorphism on the power set of $A$ certainly $\overline{R}^{op}(\uparrow_A)$ exists since $R$ extends to dcpo homomorphisms.
\begin{proof}
$\uparrow_A$ is equal to the composite
\begin{eqnarray*}
PA \rTo^{(\Omega_{\mathcal{E}}\pi_1,p_{\leq_A}\Omega_{\mathcal{E}}0_A)} (PA \otimes PA )\times (PA \otimes PA) & \rTo^{\wedge_{PA\otimes PA}}& \\
PA \otimes PA & \rTo^{\exists_{\pi_2}} & PA
\end{eqnarray*}
where $0_A:0 \rTo A$ is the unique map from the initial locale and $p_{\leq_A}:\Omega_{\mathcal{E}}0\rTo PA \otimes PA$ is the point of $PA \otimes PA$ corresponding to the partial order on $A$. Since each component in this composite is preserved by $\overline{R}^{op}$ we have that $\uparrow$ is preserved as required. To see that each component is preserved recall that locale coproduct is given by the set theoretic product of the underlying frames. Further the second arrow is meet on $PA \otimes PA $ which is right adjoint to the diagonal and so is preserved as the order enrichment is preserved. That $p_{\leq_{RA}}$ is equal to $\overline{R}^{op}(p_{\leq_A})$ (up to canonical isomorphism) is covered by the previous lemma.
\end{proof}

If we now assume that there is an adjunction $L \dashv R$ with the property that $R$ preserves finitary coproduct, then the extension to dcpo homomorphisms specializes to suplattice homomorphisms:
\begin{lemma}
\label{suplatticespeciallemma}
Given an order-enriched adjunction  $L\dashv R:
\mathbf{Loc}_{\mathcal{F}}\pile{\rTo \\ \lTo} \mathbf{Loc}_{\mathcal{E}}$ with a monad isomorphism $\phi:R\mathbb{P}_{\mathcal{E}}\rTo^{\cong}  \mathbb{P}_{\mathcal{F}}R$ such that $R$ preserves finitary coproduct then, on morphisms, both $\overline{L}^{op}$ and $\overline{R}^{op}$ preserve the property of being a suplattice homomorphism and so for any locales $X$ and $W$ over $\mathcal{E}$ and $\mathcal{F}$ respectively, there is a natural order isomorphism
\begin{eqnarray*}
\mathbf{Sup}_{\mathcal{E}}(\Omega_{\mathcal{E}}X, \Omega_{\mathcal{E}}LW) \rTo^{\cong} \mathbf{Sup}_{\mathcal{F}}(\Omega_{\mathcal{F}}RX, \Omega_{\mathcal{F}}W)
\end{eqnarray*}
given by $\Omega_{\mathcal{F}} \eta_{W} \overline{R}^{op}( \_ )$.
\end{lemma}
\begin{proof}
The second assertion is immediate from the first. Recall that finitary locale coproduct is given by set theoretic product. Localic codiagonal is set theoretic diagonal. If finitary set theoretic diagonal is preserved by an order-enriched functor then so is its left adjoint, which is finitary join. So, $\overline{L}^{op}$ and $\overline{R}^{op}$ both preserve join preserving maps and so specialize to suplattice homomorphisms provided $L$ and $R$ preserve finitary coproduct. $R$ preserves finitary coproduct by assumption and $L$ preserves finitary coproduct since it is a left adjoint.
\end{proof}
Our final basic lattice theoretic result is the following lemma on presenting any set as a dcpo equalizer.
\begin{lemma}\label{dcpoequalizer}
For any object $B$ of a topos $\mathcal{F}$ there is an equalizer diagram
\begin{equation*}
 B\rInto PB \pile{\rTo^{q_B} \\ \rTo_{q'_B}}  P(B\times B)\times \Omega_{\mathcal{F}}
\end{equation*}
in $\mathcal{F}$ where $q_{B}$ and $q'_{B}$ are dcpo homomorphisms.
\end{lemma}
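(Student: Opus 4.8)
The plan is to exhibit $B$ as the object of singleton subsets of $PB$, which is cut out by the two standard conditions ``$S$ is inhabited'' and ``$S$ has at most one element'', each of which I can phrase as an equation between dcpo homomorphisms. I would take the displayed monomorphism to be the singleton embedding $\{-\}:B\hookrightarrow PB$, $x\mapsto\{x\}$; this is monic in any topos and its image is exactly the subobject of singletons.

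For the two parallel arrows, write $\exists_B:PB\rightarrow\Omega_{\mathcal{F}}$ for the inhabitedness predicate $S\mapsto(\exists x.\,x\in S)$ (equivalently, existential quantification along $B\rightarrow 1$), and let $\Delta_B\subseteq B\times B$ be the diagonal. I would define
\[
q_B(S)=(S\times S,\ \exists_B S),\qquad q'_B(S)=\bigl((S\times S)\cap\Delta_B,\ \top\bigr).
\]
Then the equalizer condition $q_B(S)=q'_B(S)$ unwinds to $S\times S=(S\times S)\cap\Delta_B$, i.e.\ $S\times S\subseteq\Delta_B$ (so $S$ has at most one element), together with $\exists_B S=\top$ (so $S$ is inhabited). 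Since an inhabited subsingleton is a singleton, the assignment sending such an $S$ to its unique element gives the required isomorphism with $B$, and so $\{-\}:B\hookrightarrow PB$ is precisely the equalizer taken in $\mathcal{F}$.

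The substantive point---and the reason the statement is phrased with dcpo homomorphisms rather than suplattice homomorphisms---is to check that $q_B$ and $q'_B$ preserve directed joins. The inhabitedness map $\exists_B$ and intersection with the fixed open $\Delta_B$ both preserve all joins, the constant map at $\top$ is trivially a dcpo homomorphism, and a pairing into the product dcpo $P(B\times B)\times\Omega_{\mathcal{F}}$ is a dcpo homomorphism as soon as its components are; so the only real verification is that $S\mapsto S\times S$ preserves directed joins. Here directedness is essential: if $(x,y)\in(\bigcup_i S_i)\times(\bigcup_i S_i)$ for a directed family $\{S_i\}$ then $x\in S_i$ and $y\in S_j$ for some $i,j$, and directedness yields a $k$ with $S_i\cup S_j\subseteq S_k$, whence $(x,y)\in S_k\times S_k$; thus $(\bigcup_i S_i)\times(\bigcup_i S_i)=\bigcup_i(S_i\times S_i)$. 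Note that $S\mapsto S\times S$ plainly does not preserve binary joins, which is exactly why no suplattice formulation is available and the dcpo level is forced. I expect this directed-join computation, carried out internally in $\mathcal{F}$ so as to stay constructively valid, to be the only nontrivial step, the identification of the equalizer with $B$ and the monotonicity of the maps being routine.
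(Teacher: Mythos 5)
Your proof is correct and is essentially the paper's own argument: the paper likewise takes $q_B(I)=(I\times I,\{\ast\})$ and $q'_B(I)=(\{(i,i)\mid i\in I\},\{\ast\mid\exists i\in I\})$, which are your maps up to an inessential swap of which leg carries $\top$ and which carries inhabitedness, and identifies the equalizer as the inhabited subsingletons. The paper leaves the dcpo verification as routine; your explicit check that $S\mapsto S\times S$ preserves directed (but not arbitrary) joins is exactly the point it is implicitly relying on.
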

\begin{proof}
$P(B\times B)\times \Omega_{\mathcal{F}} $ is the frame of the locale $(B\times B)+1$. Recall $\Omega_{\mathcal{F}} =P1$, and $1=\{\ast \}$ the singleton set.
Let $q_{B}(I)=(I\times I,\{\ast \})$ and let $q'_{B}(I)=(\{(i,i)\mid i\in I\},\{\ast \mid \exists i\in I\})$. It is routine to verify that
these are both dcpo homomorphisms and that $B$ is their equalizer.
\end{proof}
\begin{proposition}\label{represent_geom_morphism}
If $L\dashv R:
\mathbf{Loc}_{\mathcal{F}}\pile{\rTo \\ \lTo} \mathbf{Loc}_{\mathcal{E}}$ is an order-enriched adjunction with a monad isomorphism $\phi:R\mathbb{P}_{\mathcal{E}}\cong \mathbb{P}_{\mathcal{F}}R$ such that (a) $R$ preserves finitary coproduct and (b) $R$ preserves, up to the monad isomorphism $\phi$, the strength on $\mathbb{P}$ then there exists a geometric morphism $f:\mathcal{F}\rTo\mathcal{E}$, unique up to natural transformation, such that $R\cong f^*$ and $L \cong \Sigma_f$.
\end{proposition}
\begin{proof}
From Proposition \ref{discretepreservationlemma} $R$ defines a functor $f^*:\mathcal{E}\rTo\mathcal{F}$ by restriction to discrete locales. It is our candidate for the inverse image of a geometric morphism. Note that since finite limits of discrete locales are created in $\mathbf{Loc}$ we have that $f^*$ is cartesian. To construct a right adjoint to $f^*:\mathcal{E}\rTo\mathcal{F}$ we use Lemma \ref{dcpoequalizer}. For any object $B$ of $\mathcal{F}$ define $f_*(B)$ to be the equalizer in $\mathcal{E}$ of

\begin{equation*}
\overline{L}^{op}(PB)
\begin{array}{c}
\overset{\overline{L}^{op}q_{B}}{\rTo } \\
\underset{\overline{L}^{op}q'_{B}}{\rTo }
\end{array}
\overline{L}^{op}(P(B\times B)\times \Omega_{\mathcal{F}})
\end{equation*}
where $q_{B}$ and $q'_{B}$ are dcpo homomorphisms constructed as in Lemma \ref{dcpoequalizer}. It is clear how to extend this definition to morphisms and so we have defined a functor $f_*:\mathcal{F}\rTo\mathcal{E}$. By Lemma \ref{omegaXtoA}   $\mathcal{E}(A,\Omega_{\mathcal{E}}LB)\cong\mathbf{dcpo}_{\mathcal{E}}(\Omega_{\mathcal{E}}0,\Omega_{\mathcal{E}}(A \times LB))$
naturally in dcpo homomorphisms. So, since $\mathcal{E}(A, \Omega_{\mathcal{E}}LB) = \mathcal{E}(A, \overline{L}^{op} PB )$, we can prove that $f^* \dashv f_*$ by constructing a bijection
\begin{equation*}
\Lambda:\mathbf{dcpo}_{\mathcal{E}}(\Omega_{\mathcal{E}}0,\Omega_{\mathcal{E}}(A \times LW)) \\
\rTo \mathbf{dcpo}_{\mathcal{F}}(\Omega_{\mathcal{F}}0,\Omega_{\mathcal{F}}(RA \times W))
\end{equation*}
natural in dcpo homomorphisms between frames $\Omega_{\mathcal{F}}W$. Define $\Lambda$ by
\begin{eqnarray*}
\Lambda(\psi')&=&\Omega_{\mathcal{E}}( Id_{RA} \times \eta_W )\overline{R}^{op}(\psi')
\end{eqnarray*}
and this is clearly natural in dcpo homomorphisms between $\Omega_{\mathcal{F}}W$s by construction and by exploiting the assumption that $R$ preserves the strength and so preserves $q^X$ for any locale $X$ and any dcpo homomorphism $q$. To complete our construction of a geometric morphism we have to check that $\Lambda$ is a bijection.
However for any object $A$ of $\mathcal{E}$ consider the following diagram:

\begin{diagram}
\mathbf{Sup}_{\mathcal{E}}(\Omega_{\mathcal{E}}A,\Omega_{\mathcal{E}}(LW)) & \rTo^{\cong}& \mathbf{Sup}_{\mathcal{E}}(\Omega_{\mathcal{E}},\Omega_{\mathcal{E}}(A \times LW)) & \rTo^{\cong} & \mathbf{dcpo}_{\mathcal{E}}(\Omega_{\mathcal{E}}0,\Omega_{\mathcal{E}}(A \times LW)) \\
\dTo^{\overline{R}^{op}} & & \dTo^{\overline{R}^{op}} & &  \dTo^{\overline{R}^{op}}\\
\mathbf{Sup}_{\mathcal{F}}(\Omega_{\mathcal{F}}RA,\Omega_{\mathcal{F}}(RLW)) & \rTo^{\cong}& \mathbf{Sup}_{\mathcal{F}}(\Omega_{\mathcal{F}},\Omega_{\mathcal{F}}(RA \times RLW)) & \rTo^{\cong} & \mathbf{dcpo}_{\mathcal{F}}(\Omega_{\mathcal{F}}0,\Omega_{\mathcal{F}}(RA \times RLW)) \\
\dTo^{\Omega_{\mathcal{F}}\eta_{W} \circ ( \_ )}  & &  & &  \dTo^{\Omega_{\mathcal{F}}(Id_{RA} \times \eta_{W} ) \circ ( \_ )} \\
\mathbf{Sup}_{\mathcal{F}}(\Omega_{\mathcal{F}}RA,\Omega_{\mathcal{F}}(W)) & & \rTo^{\cong}& & \mathbf{dcpo}_{\mathcal{F}}(\Omega_{\mathcal{F}}0,\Omega_{\mathcal{F}}(RA \times W)) \\
\end{diagram}
Certainly the bottom rectangle commutes by construction. The top left square commutes by Lemma \ref{psi_bar_explicit} because $\overline{R}^{op}$ preserves the relevant structure. The top right hand square commutes because it can be readily checked that $\overline{R}^{op}$ preserves the top map $\{\_\}:\Omega0 \rTo \Omega$ which is the universal map from $1$ to the free suplattice on $1$ (i.e. $\Omega$). Therefore, because the left hand vertical map is a bijection (Lemma \ref{suplatticespeciallemma}), we have that the right hand vertical map, i.e. $\Lambda$, is a bijection and so $f^* \dashv f_*$ and we have defined a geometric morphism $f:\mathcal{F} \rTo \mathcal{E}$.

To check $L \cong \Sigma_f$ we observe, for any locale $W$ over $\mathcal{F}$ and for any poset $A$ of $\mathcal{E}$
\begin{eqnarray*}
\mathbf{Pos}_{\mathcal{E}}(A,\Omega_{\mathcal{E}}\Sigma_f(W)) & =  & \mathbf{Pos}_{\mathcal{E}}(A,f_* \Omega_{\mathcal{F}}W)\\
& \cong & \mathbf{Pos}_{\mathcal{F}}(f^*A,\Omega_{\mathcal{F}}W)\\
& = & \mathbf{Pos}_{\mathcal{F}}(RA,\Omega_{\mathcal{F}}W)\\
& \cong & \{ \psi' :  \Omega_{\mathcal{F}}0 \rTo \Omega_{\mathcal{F}} (RA \times W) | \uparrow_{RA}^W \psi' = \psi' \} \\
& \cong & \{ \psi' :  \Omega_{\mathcal{E}}0 \rTo \Omega_{\mathcal{E}} (A \times LW) | \uparrow_{A}^{LW} \psi' = \psi' \} \text{ \ via $\Lambda^{-1}$}\\
& \cong & \mathbf{Pos}_{\mathcal{E}}(A,\Omega_{\mathcal{E}}LW)\\
\end{eqnarray*}
where the second and third last lines are using Lemmas \ref{monotone} and \ref{closure_preserved}. Therefore, $L \cong \Sigma_f$.

Finally the uniqueness of $f:\mathcal{F}\rTo\mathcal{E}$ (up to natural transformation) is trivial: if $f':\mathcal{F}\rTo\mathcal{E}$ is some other geometric morphism with $(f')^*:\mathbf{Loc}_{\mathcal{E}}\rTo\mathbf{Loc}_{\mathcal{F}}$ isomorphic to $R$ then $(f')^*A \cong R(A) \cong f^*(A)$ for every discrete locale $A$ over $\mathcal{E}$ and so $f'\cong f$.
\end{proof}

\section{Dropping the strength}

In this section we check that (ii) and (iii) of the main theorem are equivalent categories. It can be seen that this follows from proving,

\begin{proposition}\label{Beck_Chevalley}
Given an order-enriched adjunction $L\dashv R:
\mathbf{Loc}_{\mathcal{F}}\pile{\rTo \\ \lTo} \mathbf{Loc}_{\mathcal{E}}$ and a monad isomorphism $\phi:R\mathbb{P}_{\mathcal{E}}\cong \mathbb{P}_{\mathcal{F}}R$ such that $R$ preserves finitary coproduct then $R$ also preserves the strength on $\mathbb{P}$ up to $\phi$.
\end{proposition}
To prove the Proposition, by Lemma \ref{characterizepreservingstrength}, what is required is a proof that $\overline{R}^{op}(q^Y) \cong [\overline{R}^{op}(q)]^{RY}$ for any dcpo homomorphism $q$ between frames in $\mathcal{E}$. In outline the proof exploits the fact that $q^Y$ can be described by using the lifting to Kleisli categories of localic change of base functors (\cite{towdcpo}) and then checking that $\overline{R}^{op}$ commutes with these lifted change of base functors. This is achieved by checking that the property of having $\phi$ is stable under slicing and then, effectively, verifying Beck-Chevalley for the lifting of the square that relates the slice of $L \dashv R$ back down to $L \dashv R$ via the change of base functors.

The next subsection introduces the necessary background material on weak triquotient assignments, the second subsection proves the necessary slice stability result. The final subsection consists of a proof of the Proposition. It is worth noting that the entire proof can be carried out axiomatically using the categorical description of weak triquotient assignments contained in \cite{towaxioms}; but for the sake of not having to increase the complexity of our exposition by introducing the relevant categorical axioms, our Proposition and proof is left a result about $\mathbf{Loc}$.

\subsection{Weak triquotient assignments}
Weak triquotient assignments will play a central role in the proof, essentially because they provide an external characterization of dcpo homomorphisms in each localic slice.

\begin{definition}
If $f:X\rTo Y$ is a map in the category of locales then a \emph{weak
triquotient assignment on }$f$ is a dcpo homomorphism $f_{\#}:\Omega
X\rTo \Omega Y$ with the property that
\begin{equation*}
f_{\#}(a_{1}\wedge \lbrack a_{2}\vee \Omega f(b)])=[f_{\#}(a_{1})\wedge
b]\vee f_{\#}(a_{1}\wedge a_{2})
\end{equation*}
for all $a_{1},a_{2}\in \Omega X$ and $b\in \Omega Y$.
\end{definition}

Weak triquotient assignments were originally isolated by Vickers in an
unpublished note as a weakening of Plewe's notion of triquotient assignment \cite{triquot}.
The notion is a localic form of the topological notion of triquotient map
introduced by Michael. Note that a
lattice theoretic manipulation shows that the defining equation is
equivalent to requiring that both
\begin{equation*}
\text{(a) \ \ \ \ \ \ \ }f_{\#}(a)\wedge b\leq f_{\#}(a\wedge \Omega f(b))
\end{equation*}
and
\begin{equation*}
\text{(b) \ \ \ \ \ \ \ }f_{\#}(a\vee \Omega f(b))\leq f_{\#}(a)\vee b
\end{equation*}
for all $a\in \Omega X$ and $b\in \Omega Y$. We shall use this characterization of weak triquotient assignment in our applications below.

Any open map $f:X\rTo Y$ gives rise to an example of a weak triquotient assignment since,
it can be verified by elementary lattice theoretic manipulations, $\exists_f$ is a weak triquotient assignment on $f$. Another class of examples arises from any split localic surjection: if $q: X \rTo Y$ is split by $i: Y \rTo X$ then $q_{\#}= \Omega i$ is a weak triquotient assignment on $q$.

Weak triquotient assignments are preserved by $\overline{R}^{op}$:
\begin{lemma}
\label{Prelemma}Given $R:\mathbf{Loc}_{\mathcal{E}}\rTo \mathbf{Loc}_{\mathcal{F}}$ preserving binary coproduct and for which there is a monad opfunctor $\phi: R \mathbb{P}_{\mathcal{E}} \rTo \mathbb{P}_{\mathcal{F}} R$, then for any weak triquotient assignment $f_{\#}:\Omega _{%
\mathcal{E}}X\rTo \Omega _{\mathcal{E}}X^{\prime }$ on a locale map $%
f:X\rTo X^{\prime }$ in $\mathcal{E}$, $\overline{R}^{op}(f_{\#})$ is
a weak triquotient assignment on $R(f):RX\rTo RX^{\prime }$.
\end{lemma}

\begin{proof}
Just as in the proof of Lemma \ref{suplatticespeciallemma}, $\overline{R}^{op}$ preserves frame meet and join and therefore preserves the equations that define weak triquotient assignment.
\end{proof}

Given a locale map $f:X \rTo Y$, there is a corresponding locale in the topos of sheaves over $Y$ under the equivalence $\mathbf{Loc}/Y \simeq \mathbf{Loc}_{Sh(Y)}$ (\cite{JoyT}). The key property of weak triquotient assignments on $f:X \rTo Y$ is that they are an external representation of the internal points of the double power of this corresponding locale in the topos of sheaves over $Y$. This allows us to translate facts about internal dcpo homomorphisms relative to sheaves over $Y$ into facts about dcpo homomorphisms in the relevant ambient topos. Since the points of the double power locale are in order isomorphism with dcpo homomorphisms, this key property can be expressed as:
\begin{lemma}\label{dcpo_at_one}
There is an order isomorphism between weak triquotient assignments on $%
f:X\rTo Y$ and internal dcpo homomorphisms,
\begin{equation*}
\mathbf{dcpo}_{Y}(\Omega _{Y}X_{f},\Omega _{Y})\text{.}
\end{equation*}
\end{lemma}
Note that we are using $\mathbf{dcpo}_{Y}$ to denote the category of dcpos relative to the topos, $Sh(Y)$, of sheaves over $Y$. $X_f$ is used as the object of $\mathbf{Loc}/Y$ given by the morphism $f : X \rTo Y$ and $X_Y$ is used for the object $X \times Y \rTo^{\pi_2} Y$. Every locale map $f: X \rTo Y$ gives rise to a geometric morphism $f: Sh (X) \rTo Sh (Y) $ and so by (i) implies (ii) of the main theorem there is an order-enriched adjunction $ (\mathbf{Loc}_{X})^{op}_{\mathbb{P}_{X}}\pile{\rTo \\ \lTo} (\mathbf{Loc}_{Y})^{op}_{\mathbb{P}_{Y}}$ which is a contravariant lifting of the pullback adjunction $\Sigma_f \dashv f^* : \mathbf{Loc}/X \pile{\rTo \\ \lTo } \mathbf{Loc}/Y$; we use $f^{\#} \dashv f_*$ for this order-enriched adjunction. Note that the unit of this order-enriched adjunction at $\Omega_Y X'_{f'}$ is $\Omega_Y X'_{f'} \rTo^{\Omega_Y \pi_2} \Omega_Y [X \times_Y X']_{f\pi_1} $ and the counit at $\Omega_X W_g$ is $\Omega_X [X \times_Y W]_{\pi_1} \rTo^{\Omega_X (g,Id_W)} \Omega_X W_g$. The term \emph{change of base} will be used when we are passing through this adjunction. Finally, we will use $\gamma^Y$ for the unique geometric morphism from $Sh(Y)$ back to the relevant base topos.
\begin{proof}
Consult Lemma 41 in \cite{towdcpo}. The order isomorphism, in one direction, sends any dcpo homomorphism $q:\Omega
_{Y}X_{f}\rTo \Omega _{Y}$ to $\gamma^Y _{\ast }(q)$. In the other direction for any weak triquotient assignment $f_{\#} : \Omega X \rTo \Omega Y$ of $f$, the corresponding dcpo homomorphism relative to $Sh(Y)$ is the unique $q: \Omega_Y X_f \rTo \Omega_Y$ such that $\Omega_Y X_Y \rTo^{ \Omega_Y ( Id_X,f)  } \Omega_Y X_f  { \rTo ^q}\Omega_Y$ is equal to $\Omega_Y X_Y \rTo^{\widetilde{f_{\#}}} \Omega_Y$ where $\widetilde{(\_)}$ is adjoint transpose with respect to $(\gamma^Y)^{\#} \dashv \gamma^Y_*$.
\end{proof}
As an application we prove the key pullback stability result for locale maps with weak triquotient assignments:
\begin{lemma}
\label{pullback}
Given a pullback square
\begin{diagram}
X\times _{Y}X^{\prime } & \rTo^{\pi _{1}} & X \\
\dTo^{\pi _{2}} &  & \dTo^f \\
X^{\prime } & \rTo^{{f^{\prime }}} & Y
\end{diagram}
in $\mathbf{Loc}$ and any weak triquotient assignment $f_{\#}:\Omega
X\rTo \Omega Y$ on $f$ there exists a unique weak triquotient
assignment $(\pi _{2})_{\#}:\Omega (X\times _{Y}X^{\prime })\rTo
\Omega X^{\prime }$ on $ \pi_2: X\times _{Y}X^{\prime }\rTo X^{\prime }$ such
that the \emph{Beck-Chevalley condition} holds; i.e. such that $(\pi
_{2})_{\#}\Omega \pi _{1}=\Omega {f^{\prime}}f_{\#}$.
\end{lemma}
This was proved originally by Plewe, \cite{triquot}, for triquotient assignments and the proof for weak triquotient assignments follows a similar path. The result was proved by Vickers in the unpublished note that originally isolated the class of weak triquotient assignments. The existence part is proved in \cite{towdcpo} using the representation theorem of the previous lemma and we repeat the proof here and include the uniqueness part.
\begin{proof}
If $q: \Omega_Y (X_f) \rTo \Omega_Y$ is the dcpo homomorphism relative to $Sh(Y)$ that corresponds to $f_{\#}$ then $(f^{\prime})^{\#}(q)$ is a dcpo homomorphism from $\Omega_{X'} ([X \times_Y X']_{\pi_2})$ to $ \Omega_{X'}$ relative to $Sh(X')$ and so gives rise to a weak triquotient assignment $\gamma^{X'}_* ((f^{\prime})^{\#}(q))$ on $\pi_2$.

The map $\Omega X \rTo^{f_{\#}} \Omega Y \rTo^{\Omega f'} \Omega X'$ is equal to
\begin{eqnarray*}
\Omega_Y X_Y \rTo^{\Omega_Y (Id_X,f)} \Omega_Y X_f \rTo^q \Omega_Y \rTo^{\Omega_Y f'} \Omega_Y X'_{f'}
\end{eqnarray*}
after passing through the adjunction $(\gamma^Y)^{\#} \dashv \gamma_*^Y$ (i.e. by changing base to $Y$). This is because $\gamma_*^Y$ takes $\Omega_Y f'$ to $\Omega f'$ and the definition of $q$. Since $\Omega_Y \rTo^{\Omega_Y f'} \Omega_Y X'_{f'}$ is the unit of $(f')^{\#} \dashv f'_*$ at $\Omega_Y$, if we next change base along $f'$ we obtain $ \Omega_{X'} X_{X'} \rTo^{\Omega_{X'} (i)} \Omega_{X'} [X \times_Y X']_{\pi_2} \rTo^{(f')^{\#} (q)} \Omega_{X'}$ where $i$ is the inclusion $X \times_Y X' \rInto X \times X'$.

On the other hand, since the adjoint transpose of $\Omega X \rTo^{\pi_1} \Omega (X \times_Y X')$ under the adjunction $(\gamma^{X'})^{\#} \dashv \gamma^{X'}_*$ is $\Omega_{X'} (i)$ we have that, under change of base to $X'$, $\Omega X \rTo^{\Omega \pi_1} \Omega(X \times_Y X') \rTo^{\gamma^{X'}_* ((f')^{\#}(q))} \Omega X'$ is equal to $ \Omega_{X'} X_{X'} \rTo^{\Omega_{X'} (i)} \Omega_{X'} [X \times_Y X']_{\pi_2} \rTo^{(f')^{\#} (q)} \Omega_{X'}$ and so this completes a verification of the Beck-Chevalley condition given in the statement of the lemma.

For the uniqueness part of the statement of the corollary notice that if $(\pi_2)'_{\#}$ is some other weak triquotient assignment such that  $(\pi_2)'_{\#} \Omega \pi_1 = \Omega f' f_{\#}$ then by change of base to $X'$,
\begin{eqnarray*}
\Omega_{X'} X_{X'} \rTo^{\Omega_{X'} (i)} \Omega_{X'} (X \times_Y X') \rTo^{q'} \Omega_{X'} = \\
\Omega_{X'} X_{X'} \rTo^{\Omega_{X'} (i)} \Omega_{X'} (X \times_Y X') \rTo^{(f')^{\#}(q)} \Omega_{X'} \\
\end{eqnarray*}
where $q'$ is the dcpo homomorphism relative to $Sh(X')$ that corresponds to $(\pi_2)'_{\#}$. It follows that $q'=(f')^{\#}(q)$ and therefore that $(\pi_2)'_{\#}= (\pi_2)_{\#}$, because $\Omega_{X'}(i)$ is an epimorphism since $i$ is a regular monomorphism.
\end{proof}
\begin{corollary}
\label{splitting}
Given the conditions of the lemma, if $f:X \rTo Y$ is a split surjection (split by $i: Y \rTo X$) then the unique weak triquotient assignment on $\pi_2$ corresponding to $f_{\#} = \Omega i$ is $\Omega (\pi_1^* (i))$ where $\pi_1^* (i)$ is the pullback of $i$ along $\pi_1 : X \times_Y X' \rTo X$.
\end{corollary}
\begin{proof}
The adjoint transpose of $\Omega X \rTo^{\Omega i} \Omega Y$ across $(\gamma^Y)^{\#} \dashv \gamma_*^Y$ is $\Omega_Y X_Y \rTo^{\Omega_Y (i,Id_Y)} \Omega_Y$ which factors as $\Omega_Y X_Y \rTo^{ (Id_X, f)} \Omega_Y X_f \rTo^{\Omega_Y i} \Omega_Y$. Therefore the dcpo homomorphism relative to $Sh(Y)$ corresponding to the weak triquotient assignment $\Omega i$ on $f$ is equal to $\Omega_X X_f \rTo^{\Omega_Y i} \Omega_Y$. By considering the pullback diagrams
\begin{diagram}
X' & \rTo^{f'} & Y \\
\dTo^{(if',Id_{X'})} &  & \dTo^i \\
X \times_Y X' & \rTo^{\pi_1} & X \\
\dTo^{\pi_2} &   & \dTo^f \\
X' & \rTo^{f'} & Y
\end{diagram}
it can be seen that $(f')^{\#}(\Omega_Y i)$ is $\Omega_{X'} (X \times_Y X')_{\pi_2} \rTo^{\Omega_{X'}(if',Id_{X'})} \Omega X'$, i.e. $\Omega_{X'}(\pi_1^*(i))$. The result follows since $\gamma^{X'}_*(\Omega_{X'}(\pi_1^*(i)))=\Omega(\pi_1^*(i))$.
\end{proof}
Note that because Lemma \ref{dcpo_at_one} can be proved using only topos valid reasoning it can be carried
out in the topos of sheaves $Sh(Z)$ for any locale $Z$ and so we have:
\begin{lemma}
\label{wtqa_dcpohoms}For any locale maps $f:X\rTo Y$ and $f^{\prime
}:X^{\prime }\rTo Y$ there is an order isomorphism between dcpo
homomorphisms $q:\Omega _{Y}X_{f}\rTo \Omega _{Y}X_{f^{\prime
}}^{\prime }$ and weak triquotient assignments on $\pi _{2}:X\times
_{Y}X^{\prime }\rTo X^{\prime }$.
\end{lemma}

\begin{proof}
By change of base to $Sh(X^{\prime })$. The geometric morphism $f^{\prime
}:Sh(X^{\prime })\rTo Sh(Y)$ induces an order isomorphism between $%
\mathbf{dcpo}_{Y}(\Omega _{Y}X_{f},\Omega _{Y}X_{f^{\prime
}}^{\prime })$ and $\mathbf{dcpo}_{X^{\prime }}(\Omega _{Y}([X\times
_{Y}X^{\prime }]_{\pi _{2}}),\Omega _{X^{\prime }})$.
\end{proof}

The notation $a^q: \Omega (X \times_Y X') \rTo \Omega X' $ is used for the weak triquotient assignment corresponding to $q:\Omega _{Y}X_{f}\rTo \Omega _{Y}X_{f^{\prime }}^{\prime }$. For example, for any locale map $h: X'_{f'} \rTo X_f$ in the slice $\mathbf{Loc}/Y$, $a^{\Omega_Y(h)}$ is equal to $\Omega (X \times_Y X') \rTo^{\Omega (h,Id_{X'})} \Omega X'$. Since $q$ factors as $\Omega_Y X_f \rTo^{\Omega_Y \pi_1} \Omega_Y [X \times_Y X']_{f \pi_1} \rTo^{f'_*(\hat{q})} \Omega_Y X'_{f'}$ by change of base across $(f')^{\#} \dashv f'_*$ (where $\hat{(\_)}$ is adjoint transpose with respect to $(f')^{\#} \dashv f'_*$), we have that $\gamma_*^Y(q)$ is equal to $\Omega X \rTo^{\pi_1} \Omega(X \times_Y X') \rTo^{a^q} \Omega X$. This is usually enough to establish facts about $q$ from $a^q$ since $\gamma_*^Y$ is faithful as the counit of $(\gamma^Y)^{\#} \dashv \gamma^Y_*$ is equal to $\Omega_Y (Id_X ,f)$ which is an epimorphism since $(Id_X,f): X_f \rInto X_Y$ is a regular monomorphism.

Our final technical step in setting up the necessary background on weak triquotient assignments is to check that the previous lemma is natural in locale maps:
\begin{lemma}\label{Naturality}
Given an internal dcpo homomorphism $q:\Omega
_{Y}X_{f}\rTo \Omega _{Y}X_{f^{\prime }}^{\prime }$,

(i) For any $h:X_{f^{\prime \prime }}^{\prime
\prime }\rTo X_{f^{\prime }}^{\prime }$, $a^{\Omega_Y (h) q}$ is the unique weak triquotient assignment on $\pi_2^{\prime \prime} : X\times _{Y}X^{\prime \prime }  \rTo X^{ \prime \prime}$ such that Beck-Chevalley holds for the
pullback square
\begin{diagram}
X\times _{Y}X^{\prime \prime } & \rTo^{Id_X\times h} &
X\times _{Y}X^{\prime } \\
\dTo^{\pi _{2}^{\prime \prime }} &  & \dTo^{\pi _{2}^{\prime}} \\
X^{\prime \prime } & \rTo^{h} & X^{\prime }
\end{diagram}
i.e. such that $a^{\Omega_Y (h) q}\Omega (Id_X \times h)=\Omega(h) a^q$.

(ii) For any morphism $t:X_f \rTo Z_g$ of $\mathbf{Loc}/Y$, $a^{q \Omega_Y t} = a^q \Omega (t \times Id_{X'})$ with $t \times Id_{X'}: X \times_Y X' \rTo Z \times_Y X'$.
\end{lemma}

\begin{proof}
{\bf (i)}. Let us first consider $h: X^{\prime \prime} \rTo X'$ as a geometric morphism and note that the unit of $h^{\#} \dashv h_*$ at $\Omega_{X'}([X \times_Y X']_{\pi'_2})$ is
\begin{eqnarray*}
\Omega_{X'} ( [X \times_Y X']_{\pi'_2}) \rTo^{\Omega_{X'} (Id_X \times h)} \Omega_{X'}( [X \times_Y X^{\prime \prime}]_{h \pi_2^{\prime \prime}})\text{.}
\end{eqnarray*}
This can be seen, for example, by unraveling the isomorphism $X \times_Y X^{\prime \prime} \cong [ X \times_Y X']_{\pi_2} \times_{X'} X^{\prime \prime}$.

Since $a^{\Omega_Y(h)q}$ is by definition a weak triquotient assignment on $\pi_2^{\prime \prime}: X \times_Y X^{\prime \prime} \rTo X^{\prime \prime}$ all that is required to complete the proof of (i) is to verify that $a^{\Omega_Y (h) q}\Omega (Id\times h)=\Omega(h) a^q$. Now, $a^{\Omega_Y(h)q} = \gamma^{X^{\prime \prime}}_*(\widehat{ \Omega_Yh q})$, where $\widehat{(\_)}$ is change of base to $X^{\prime \prime}$. But $\widehat{(\_)}$ can be found by first changing base to $X'$ and then changing base via $h$. The image of $\Omega_Y (h) q$ relative to $X'$ is
\begin{eqnarray*}
\Omega_{X'}( [X \times_Y X']_{\pi'_2}) \rTo^{\widetilde{q}} \Omega_{X'} \rTo^{\Omega_{X'} h} \Omega_{X'} X^{\prime \prime}_h
\end{eqnarray*}
where $\widetilde{(\_)}$ is change of base to $X'$. The image of this, via $h$, is $\Omega_{X^{\prime \prime}}([X \times_Y X^{\prime \prime}]_{\pi_2^{\prime \prime}}) \rTo^{h^{\#}(\widetilde{q})} \Omega_{X^{\prime \prime}}$ because $\Omega_{X'} h$ is the unit of $h^{\#} \dashv h_*$ at $\Omega_{X'}$. It follows:
\begin{eqnarray*}
a^{\Omega_Y(h)q}\Omega(Id_X \times h)   & = & \gamma^{X^{\prime \prime}}_*(\widehat{\Omega_Yh q})\Omega(Id_X \times h) \\
                                        & = & \gamma^{X^{\prime \prime}}_*h^{\#}(\widetilde{q})\Omega(Id_X \times h) \\
                                        & = &  \gamma^{X^{\prime}}_* h_* h^{\#}(\widetilde{q})\gamma^{X^{\prime}}_* \Omega_{X'}(Id_X \times h) \\
                                        & = &  \gamma^{X^{\prime}}_* (h_* h^{\#}(\widetilde{q}) \Omega_{X'}(Id_X \times h)) \\
                                        & = &  \gamma^{X^{\prime}}_* (\Omega_{X'}(h) (\widetilde{q})) \text{ \ \ \ \ \ \ \ \ \ \ \ \ \ \ \ \ \ \ \ \ \ \      ($\star$})\\
                                        & = &  \gamma^{X^{\prime}}_* \Omega_{X'}(h) \gamma^{X^{\prime}}_*(\widetilde{q}) \\
                                        & = & \Omega(h) a^q
\end{eqnarray*}
where we are exploiting our first observation that $\Omega_{X'}(Id_X \times h)$ is a unit at stage $(\star)$.

{\bf (ii)}. $q \Omega_Y t$ is equal to $\Omega_{X'} ( [Z \times_Y X']_{\pi'_2}) \rTo^{\Omega_{X'}(t \times Id_{X'})} \Omega_{X'} ([X \times_Y X']_{\pi'_2}) \rTo^{\widetilde{q}} \Omega_{X'}$ when changed to base $X'$, where $\widetilde{q}$ is the mate of $q$. The result follows by applying $\gamma^{X'}_*$ since $a^q = \gamma_*^{X'}( \widetilde{q})$ and $ \gamma_*^{X'}(\Omega_{X'} (t \times Id_{X'}) ) = \Omega (t \times Id_{X'})$.
\end{proof}

\subsection{Slice stability}
We now embark on a series of lemmas that culminates in showing that (iii) of the main theorem is stable under slicing. If $L\dashv R$ is an order-enriched adjunction between $\mathbf{Loc}_{\mathcal{F}}$ and $\mathbf{Loc}_{\mathcal{E}}$ then for any locale $Y$ of $\mathcal{E}$ there is a sliced order-enriched adjunction,
\begin{equation*}
\mathbf{Loc}_{\mathcal{F}}/RY
\begin{array}{c}
\overset{L_{Y}}{\rTo } \\
\underset{R_{Y}}{\lTo }
\end{array}
\mathbf{Loc}_{\mathcal{E}}/Y
\end{equation*}
given by $R_{Y}(X_{f})=R(X)_{R(f)}$ and $L_{Y}(W_{g})=LW_{\widetilde{g}}$,
where $\widetilde{g}$ is the mate of $g$ under the adjunction $L\dashv R$. It is easy to see that if $R$ preserves finitary coproduct then so does $R_Y$ because finitary coproducts in any slice $\mathbf{Loc}/Z$ are created in $\mathbf{Loc}$ (this is true of any category). To show that the conditions in (iii) of the main theorem hold for the sliced adjunction, given an assumption that they hold for $L\dashv R$, we therefore need to show that there is a lifting $  \overline{L_Y} \dashv \overline{R_Y}$ to the Kleisli categories given a lifting $  \overline{L} \dashv \overline{R}$ of $L \dashv R$; this is by appeal to Lemma \ref{adjunction lemma}. In fact we construct the contravariant adjunction $  \overline{R_Y}^{op} \dashv \overline{L_Y}^{op}$ and our first step is to establish an order isomorphism between the relevant sets of dcpo homomorphisms:

\begin{lemma}\label{define_tau}
There is an order isomorphism
\begin{eqnarray*}
\tau_{X_f,W_g}:\mathbf{dcpo}_{Y}(\Omega_{Y}X_{f},\Omega _{Y}L_{Y}(W_{g})) \rTo^{\cong} \mathbf{dcpo}_{RY}(\Omega _{RY}R_{Y} (X_{f}),\Omega _{RY}W_{g})\text{.}
\end{eqnarray*}
for any objects $X_f$ and $W_g$ of $\mathbf{Loc}/Y$ and $\mathbf{Loc}/RY$ respectively. If $n: L_Y(W_g) \rTo X_f$ is a morphism of $\mathbf{Loc}/Y$ then $\tau_{X_f,W_g} \Omega_Y(n) = \Omega_{RY} \widetilde{n}$ where $\widetilde{(\_)}$ is adjoint transpose with respect to $L_Y \dashv R_Y$.
\end{lemma}
\begin{proof}
Given the order isomorphism between such dcpo homomorphisms and weak
triquotient assignments (Lemma \ref{wtqa_dcpohoms}) this amounts to
establishing an order isomorphism between weak triquotient assignments. Given a dcpo homomorphism $q:\Omega_Y X_f \rTo \Omega_Y L_Y (W_g)$ there is $a^q$ the corresponding weak triquotient assignment on $X\times _{Y}LW\overset{%
\pi _{2}}{\rTo }LW$. Then $\overline{R}^{op}(a^q)$ is a weak
triquotient assignment on $RX\times _{RY}RLW\overset{\pi _{2}}{\rTo }%
RLW$ by Lemma \ref{Prelemma} and so by the pullback property of weak
triquotient assignments, since
\begin{diagram}
RX\times _{RY}W & \rTo^{Id_{RX}\times \eta _{W}} & RX\times
_{RY}RLW \\
\dTo^{\pi _{2}} &  & \dTo^{\pi _{2}} \\
W & \rTo^{\eta _{W}} & RLW
\end{diagram}
is a pullback square, there exists a unique weak triquotient assignment, $%
b :$ $\Omega _{\mathcal{F}}(RX\times _{RY}W)\rTo \Omega _{%
\mathcal{F}}(W)$ say, such that
\begin{equation*}
b \Omega _{\mathcal{F}}(Id_{RX}\times \eta _{W})=\Omega _{\mathcal{F}%
}\eta _{W}\overline{R}^{op}(a^q)\text{ \ \ \ \ \ Eqn. I.}
\end{equation*}
Define $\tau_{X_f,W_g}(q)$ by $b = a^{\tau_{X_f,W_g}(q)}$; i.e. $\tau_{X_f,W_g}(q)$ is the unique dcpo homomorphism over $Sh_{\mathcal{F}}(RY)$ whose corresponding weak triquotient assignment is $b$.

In the other direction if $a^r$ is the weak triquotient assignment
on $RX\times _{RY}W\overset{\pi _{2}}{\rTo }W$ corresponding to some dcpo homomorphism $r: \Omega_{RY} R_Y(X_f) \rTo \Omega_{RY} W_g$ then define $c : \Omega _{\mathcal{F}}(X\times _{Y}LW)\rTo \Omega _{\mathcal{F}}(LW)$
to be the adjoint transpose of the map
\begin{equation*}
\Omega _{\mathcal{F}}(RX\times _{RY}RLW)\rTo^{\Omega _{\mathcal{F}%
}(Id_{RX}\times \eta _{W})}\Omega _{\mathcal{F}}(RX\times
_{RY}W)\rTo^{a^r}\Omega _{\mathcal{F}}(W)
\end{equation*}
with respect to the adjunction $\overline{R}^{op} \dashv \overline{L}^{op}$. Consider the diagram,
\begin{diagram}
\Omega _{\mathcal{F}}(RX {\times _{RY}}RLW){\times} \Omega _{\mathcal{F}}(RLW) & \rTo^{Id\times \overline{R}^{op}\Omega _{\mathcal{E}}\pi _{2}}
 & \Omega _{\mathcal{F}}(RX\times _{RY}RLW)\times \Omega _{\mathcal{F}}(RX\times _{RY}RLW) & \rTo^{\overline{R}^{op}\wedge } & \Omega _{\mathcal{F}}(RX\times _{RY}RLW) \\
\dTo_{\Omega _{\mathcal{F}}(Id_{RX}\times \eta _{W})\times \Omega _{\mathcal{F}}(\eta _{W})} &  &  &  & \dTo^{\Omega _{\mathcal{F}}(Id_{RX}\times \eta_{W})} \\
\Omega _{\mathcal{F}}(RX\times _{RY}W)\times \Omega _{\mathcal{F}}(W) &
\rTo^{Id\times \Omega _{\mathcal{F}}\pi _{2}} & \Omega
_{\mathcal{F}}(RX\times _{RY}W)\times \Omega _{\mathcal{F}}(RX\times _{RY}W)
& \rTo^{\wedge } & \Omega _{\mathcal{F}}(RX\times
_{RY}W) \\
\dTo_{a^r \times Id} &  &  &  & \dTo^{a^r} \\
\Omega _{\mathcal{F}}(W)\times \Omega _{\mathcal{F}}(W) &  & \rTo{\wedge
_{\Omega _{\mathcal{F}}(W)}} &  & \Omega _{\mathcal{F}}(W)
\end{diagram}

The top square commutes because $\overline{R}^{op}(\wedge )$ is frame meet (since $R$ preserves the co-diagonal and $\overline{R}^{op}$ is order-enriched)
and $\overline{R}^{op}\Omega _{\mathcal{E}}\pi _{2} \cong \Omega _{\mathcal{F}}\pi _{2}$. Since further we have that
\begin{equation*}
\wedge _{\Omega _{\mathcal{F}}(W)}(a^r \times Id)\leq a^r \wedge _{\Omega _{\mathcal{F}}(RX\times _{Y}W)}(Id\times \Omega _{%
\mathcal{F}}(\pi _{2}))
\end{equation*}
by definition of weak triquotient assignment, it follows that the bottom and
left hand vertical composition in this diagram is less than or equal to the
right hand and top composition, i.e. that
\begin{eqnarray*}
&&\wedge _{\Omega _{\mathcal{F}}(W)}(a^r\times Id)(\Omega _{%
\mathcal{F}}(Id_{RX}\times \eta _{W})\times \Omega _{\mathcal{F}}(\eta _{W}))
\\
&\leq &a^r\Omega _{\mathcal{F}}(Id_{RX}\times \eta _{W})(%
\overline{R}^{op}\wedge _{\Omega _{\mathcal{E}}(X\times _{Y}LW)})(Id\times
\overline{R}^{op}\Omega _{\mathcal{E}}\pi _{2})\text{.}
\end{eqnarray*}
The adjoint transpose (under $\overline{R}^{op}\dashv \overline{L}^{op}$) of
the top and right hand composition is $c \wedge _{\Omega _{\mathcal{E}%
}(X\times _{Y}LW)}(Id\times $ $\Omega _{\mathcal{E}}\pi _{2})$ since $%
Id\times \overline{R}^{op}\Omega _{\mathcal{E}}\pi _{2}\cong\overline{R}%
^{op}(Id\times \Omega _{\mathcal{E}}\pi _{2})$. Since we
also have that $\overline{L}^{op}\wedge _{\Omega _{\mathcal{F}%
}W}\cong\wedge _{\Omega _{\mathcal{E}}LW}$ the adjoint transpose of the bottom
and left hand vertical composition is $\wedge _{\Omega _{\mathcal{E}%
}LW}(c \times Id)$. Since the adjunction $\overline{R}^{op}\dashv
\overline{L}^{op}$ is order-enriched it establishes an order isomorphism on
homsets and so
\begin{equation*}
\wedge _{\Omega _{\mathcal{E}}LW}(c \times Id)\leq c \wedge
_{\Omega _{\mathcal{E}}(X\times _{Y}LW)}(Id\times \Omega _{\mathcal{E}}\pi
_{2})\text{.}
\end{equation*}
A dual argument, with binary meet in place of binary join establishes
\begin{equation*}
c \vee _{\Omega _{\mathcal{E}}(X\times _{Y}LW)}(Id\times \Omega _{%
\mathcal{E}}\pi _{2})\leq \vee _{\Omega _{\mathcal{E}}LW}(c \times Id)%
\text{,}
\end{equation*}
and so $c $ is a weak triquotient assignment on $\pi _{2}:X\times
_{Y}LW\rTo LW$.

By applying $\overline{R}^{op}$ to $c $ and postcomposing with $\Omega
_{\mathcal{F}}(\eta _{W})$ we get the adjoint transpose of $c $ (i.e. $%
a^r \Omega _{\mathcal{F}}(Id_{RX}\times \eta _{W})$). Therefore
the weak triquotient assignment on $\pi _{2}:RX\times _{Y}W\rTo W$
obtained from $c $ is again $a^r$ by the uniqueness of weak
triquotient assignments satisfying Beck-Chevalley for the pullback square,
\begin{diagram}
RX\times _{RY}W & \rTo^{Id_{RX}\times \eta _{W}} & RX\times
_{RY}RLW \\
\dTo^{\pi _{2}} &  & \dTo^{\pi _{2}} \\
W & \rTo^{\eta _{W}} & RLW
\end{diagram}
On the other hand given any weak triquotient assignment $a^q$ on
$\pi _{2}:X\times _{Y}LW\rTo LW$, the adjoint transpose of $b
\Omega _{\mathcal{F}}(Id_{RX}\times \eta _{W})$ ($b$ defined from $a^q$ as in before Eqn I above) is $a^q$ by application
of Eqn I. We have therefore established an order isomorphism between weak
triquotient assignments on $\pi _{2}:X\times _{Y}LW\rTo LW$ and weak
triquotient assignments on $\pi _{2}:RX\times _{Y}W\rTo W$, and
therefore, by application of Lemma \ref{wtqa_dcpohoms}, we have established an
order isomorphism as required.

For the assertion $\tau_{X_f,W_g} \Omega_Y(n) = \Omega_{RY} (\widetilde{n})$ note that because $a^{\Omega_Y(n)} = \Omega (n, Id_{LW})$ we have that $\overline{R}^{op}(a^{\Omega_Y(n)}) = \Omega (Rn, Id_{RL(W)})$. Now $RLW \rTo^{(Rn,Id_{RL(W)})} RX \times_{RY} RLW$ is a splitting of the localic surjection $RX \times_{RY} RLW \rTo^{\pi_2} RLW$ and so by Corollary \ref{splitting} the unique weak triquotient assignment corresponding to $\tau_{X_f,W_g} \Omega_Y(n)$ is equal to $\Omega (\widetilde{n},Id_W) $ because there is a pullback square:

\begin{diagram}
W & \rTo^{\eta_W} & RLW \\
\dTo^{(\widetilde{n},Id_W)} &  & \dTo^{(Rn,Id_{RLW})} \\
RX \times_Y W & \rTo^{Id_{RX} \times \eta_W } & RX \times_{RY} RLW
\end{diagram}
It follows that $\tau_{X_f,W_g} \Omega_Y(n) = \Omega_{RY} (\widetilde{n})$ because $a^{\Omega_{RY}(\widetilde{n})} = \Omega (\widetilde{n}, Id_{W})$ and so $\Omega_{RY}(\widetilde{n})$ is the unique dcpo homomorphism corresponding to the weak triquotient assignment $\Omega (\widetilde{n}, Id_{W})$.
\end{proof}
The next step is to make $\overline{R_Y}^{op}$ functorial. It should be clear how $\overline{R_Y}^{op}$ is going to be defined on morphisms given that $\tau_{X_f,W_g}$ is a contravariant extension of the order isomorphism $\mathbf{Loc}/Y (L_Y(W_g),X_f) \cong \mathbf{Loc}/RY(W_g,R_Y(X_f))$ induced by $L_Y \dashv R_Y$.
\begin{definition}
For any dcpo homomorphism $q: \Omega_Y X_f \rTo \Omega_Y X'_{f'}$ define $\overline{R_Y}^{op}(q)$ to be
\begin{eqnarray*}
\tau_{X_f,R_Y(X'_{f'})}(\Omega_Y X_f \rTo^{q} \Omega_Y X'_{f'} \rTo^{\Omega_Y \epsilon_{X'}} \Omega_Y L_Y R_Y X'_{f'})\text{.}
\end{eqnarray*}
\end{definition}
With this definition it is clear that $\overline{R_Y}^{op}(\Omega_Y h)= \Omega_{RY} R_Y (h)$ for any morphism $h$ of $\mathbf{Loc}/Y$ and so $\overline{R_Y}^{op}$ extends $R_Y$ contravariantly.
\begin{lemma}\label{R_Y_Functorial}
For any $q: \Omega_Y X_f \rTo \Omega_Y X'_{f'}$, (i) $\overline{R}^{op}(a^q) = a^{\overline{R_Y}^{op}(q)}$ and (ii) $\gamma^{RY}_* \overline{R_Y}^{op}(q)=  \overline{R}^{op} \gamma^Y_* (q)$. It follows that,

{\bf(a)} $\overline{R_Y}^{op}$ is functorial,

{\bf(b)} for $X'_{f'}=L_Y W_g$, $\tau_{X_f,W_g}(q) = \Omega_{RY} \eta_W \overline{R_Y}^{op}(q)$; and,

{\bf(c)} $\tau_{X_f,W_g}$ is natural in dcpo homomorphisms between $\Omega_Y X_f$s.
\end{lemma}
\begin{proof}
For (i), by application of the first part of the naturality lemma (Lemma \ref{Naturality}), we have that $a^{\Omega_Y \epsilon_{X'}q}$ is equal to the unique weak triquotient assignment, $b$, on $\pi_2 : X \times_Y L R X' \rTo LRX' $ such that $b \Omega_{\mathcal{E}}(Id_X \times \epsilon_{X'}) = \Omega_{\mathcal{E}} \epsilon_{X'} a^q$. Now consider the pullback diagrams
\begin{diagram}
RX \times_{RY} RX' & \rTo^{Id_{RX} \times \eta_{RX'}} & RX \times_{RY} RLRX' & \rTo^{Id_{RX} \times R \epsilon_{X'}} & RX \times_{RY} RX' \\
\dTo^{\pi_2}       &                                 &      \dTo^{\pi_2}     &                                     & \dTo^{\pi_2}         \\
RX'                & \rTo^{\eta_{RX'}}                &  RLRX'                & \rTo^{R \epsilon_{X'}}              & RX' \\
\end{diagram}
The right hand side square is the image under $R$ of the pullback diagram that is used to define $b$. By definition $a^{\overline{R_Y}^{op}(q)}$ is the unique weak triquotient assignment on $\pi_2 : RX \times_Y RX' \rTo RX'$ such that $a^{\overline{R_Y}^{op}(q)}\Omega_{\mathcal{F}}(Id_{RX} \times \eta_{RX'}) = \Omega_{\mathcal{F}} \eta_{RX'} \overline{R}^{op}(b)$. Therefore $\overline{R}^{op}(a^q) = a^{\overline{R_Y}^{op}(q)}$ by the uniqueness part of Lemma \ref{pullback} applied to the whole diagram since $\overline{R}^{op}(b)\Omega (Id_{RX} \times R \epsilon_{X'})= \Omega R \epsilon_{X'} \overline{R}^{op}(a^q)$ and both the bottom and top rows are identity maps.

For (ii), as was clarified after Lemma \ref{wtqa_dcpohoms}, we have that $\gamma_*^Y(q)$ factors as
\begin{eqnarray*}
\Omega_{\mathcal{E}} X \rTo^{\pi_1} \Omega_{\mathcal{E}} (X \times_{Y} X' )\rTo^{a^q} \Omega_{\mathcal{E}} X'\text{.}
\end{eqnarray*}
So (ii) follows from (i) since $\gamma_*^{RY}(\overline{R_Y}^{op}(q))$ factors as
\begin{eqnarray*}
\Omega_{\mathcal{F}} RX \rTo^{\pi_1} \Omega_{\mathcal{F}} (RX \times_{RY} RX') \rTo^{a^{\overline{R_Y}^{op}(q)}} \Omega_{\mathcal{F}} RX' \text{.}
\end{eqnarray*}
{\bf (a)} is immediate from (ii) since $\gamma^{RY}_*$ is faithful. For {\bf (b)}, $ a^{\tau_{X_f,W_g}(q)}$ is the unique weak triquotient assignment on $\pi_2:RX \times_{RY} W \rTo W$ such that $a^{\tau_{X_f,W_g}(q)} \Omega_{\mathcal{F}}(Id_{RX} \times \eta_W) = \Omega_{\mathcal{F}} \eta_W \overline{R}^{op} (a^q)$. By the naturality lemma $a^{\Omega_{\mathcal{F}}\eta_W \overline{R_Y}^{op}(q)}$ is the unique weak triquotient assignment on $\pi_2$ such that $a^{\Omega_{\mathcal{F}}\eta_W \overline{R_Y}^{op}(q)} \Omega_{\mathcal{F}}(Id_{RX} \times \eta_W) = \Omega_{\mathcal{F}} \eta_W  a^{ \overline{R_Y}^{op}(q)}$ and so {\bf (b)} follows from (i). {\bf (c)} is immediate from {\bf (a)} and {\bf (b)}.
\end{proof}
It follows that $\overline{R_Y}^{op} \dashv \overline{L_Y}^{op}$ where on morphism $\overline{L_Y}^{op}$ is defined by,
\begin{definition}
For any dcpo homomorphism $r: \Omega_{RY} W_g \rTo \Omega_{RY} W'_{g'}$ define $\overline{L_Y}^{op}(r)$ to be
\begin{eqnarray*}
\tau^{-1}_{L_Y(W_g),W'_{g'}}(\Omega_{RY} R_Y L_Y (W_g) \rTo^{\Omega_{RY} \eta_W}  \Omega_{RY}W_g \rTo^{r} \Omega_{RY} W'_{g'})\text{.}
\end{eqnarray*}
\end{definition}
That $\overline{L_Y}^{op}$ is functorial and right adjoint to $\overline{R_Y}^{op}$ is immediate from the fact that $\tau$ is an order isomorphism that is natural with respect to dcpo homomorphisms between $\Omega_Y X_f$s. That the resulting adjunction $\overline{L_Y} \dashv \overline{R_Y}$ is a lifting of $L_Y \dashv R_Y$ follows as it is shown above (Lemma \ref{define_tau}) that $\tau_{X_f,W_g} \Omega_Y(n) = \Omega_{RY} \widetilde{n}$. It follows that:
\begin{theorem}\label{slicestability}
The conditions contained in (iii) of the main theorem are stable under slicing: if $L\dashv R:\mathbf{Loc}_{\mathcal{F}}\pile{\rTo \\ \lTo} \mathbf{Loc}_{\mathcal{E}}$ is an order-enriched adjunction with $R$ preserving finitary coproduct and there is a monad isomorphism $\phi:R\mathbb{P}_{\mathcal{E}}\rTo^{\cong} \mathbb{P}_{\mathcal{F}}R$, then for any locale $Y$ over $\mathcal{E}$, $R_Y$ in the sliced adjunction $L_Y\dashv R_Y:\mathbf{Loc}_{\mathcal{F}}/RY \pile{\rTo \\ \lTo} \mathbf{Loc}_{\mathcal{E}}/Y$ preserves finitary coproduct and there is a monad isomorphism  $\phi_Y:R_Y\mathbb{P}_Y \rTo^{\cong} \mathbb{P}_{RY}R_Y$.
\end{theorem}
However for our proof of Proposition \ref{Beck_Chevalley} we are going to need to be more explicit about $\overline{L_Y}^{op}$:
\begin{lemma}\label{L_Lemma}
If $r: \Omega_{RY} W_g \rTo \Omega_{RY} W'_{g'}$ is a dcpo homomorphism relative to $Sh_{\mathcal{F}}(RY)$ then $\gamma^Y_*\overline{L_Y}^{op}(r)= \overline{L}^{op}\gamma_*^{RY}(r)$.
\end{lemma}
\begin{proof}
$\gamma_*^{RY}(r)$ factors as $\Omega_{\mathcal{F}} W \rTo^{\Omega_{\mathcal{F}} \pi_1} \Omega_{\mathcal{F}} ( W \times_{RY} W') \rTo^{a^r} \Omega_{\mathcal{F}} W'$, and $\gamma_*^Y ( \overline{L_Y}^{op} (r))$ factors as $\Omega_{\mathcal{E}} LW \rTo^{\Omega_{\mathcal{E}} \pi_1} \Omega_{\mathcal{E}} (LW \times_Y LW') \rTo^{a^{\overline{L_Y}^{op}(r)}} \Omega_{\mathcal{E}} LW'$ so we need to check that
\begin{eqnarray*}
a^{\overline{L_Y}^{op}(r)}\Omega_{\mathcal{E}}\pi_1 = \overline{L}^{op}(a^r)\Omega_{\mathcal{E}} L \pi_1
\end{eqnarray*}
where $\pi_1: LW \times_Y LW' \rTo LW$ (on the left hand side) and $L \pi_1 : L (W \times_{RY} W') \rTo LW$ (on the right hand side).
Now, by the second part of the naturality lemma (Lemma \ref{Naturality}), the weak triquotient assignment corresponding to the morphism $ \Omega_{RY} R_Y L_Y (W_g) \rTo^{\Omega_{RY} \eta_W} \Omega_{RY} W_g \rTo^{r} \Omega_{RY} W'_{g'}$ used in the definition of $\overline{L_Y}^{op}(r)$ is
\begin{eqnarray*}
\Omega_{\mathcal{F}}( RLW \times_{RY} W') \rTo^{\Omega_{\mathcal{F}} (\eta_W \times Id_{W'})} \Omega_{\mathcal{F}}( W \times_{RY} W') \rTo^{a^r} \Omega_{\mathcal{F}} W' \text{.}
\end{eqnarray*}
Therefore, from the definition of $\tau^{-1}$, we have that $a^{\overline{L_Y}^{op}(r)}$ is equal to the adjoint transpose (via $\overline{R}^{op} \dashv \overline{L}^{op}$) of
\begin{eqnarray*}
\Omega_{\mathcal{F}}( RLW \times_{RY} RLW') \rTo^{\Omega_{\mathcal{F}} (\eta_W \times \eta_{W'})} \Omega_{\mathcal{F}}( W \times_{RY} W') \rTo^{a^r} \Omega_{\mathcal{F}} W'\text{.}
\end{eqnarray*}
Hence $a^{\overline{L_Y}^{op}(r)}$ is equal to the map
\begin{eqnarray*}
\Omega_{\mathcal{E}}(LW \times_Y LW') & \rTo^{\Omega_{\mathcal{E}} \epsilon_{LW \times_Y LW'}} & \Omega_{\mathcal{E}} LR(LW \times_Y LW')\rTo^{\cong} \Omega_{\mathcal{E}}L(RLW \times_{RY} RLW')\\
& \rTo^{\Omega_{\mathcal{E}}L(\eta_W \times \eta_{W'})} & \Omega_{\mathcal{E}} L (W \times_{RY} W') \rTo^{\overline{L}^{op}(a^r)}\Omega_{\mathcal{E}} LW'
\end{eqnarray*}
and the proof reduces to checking that $L \pi_1 : L (W \times_{RY} W') \rTo LW$ factors as $L(W \times_{RY} W') \rTo^{L(\eta_W \times \eta_{W'})} L(RLW \times_{RY} RLW') \rTo^{L(R \pi_1, R \pi_2)^{-1}} LR(LW \times_Y L W') \rTo^{\epsilon_{LW \times_Y LW'}} LW \times_Y LW' \rTo^{\pi_1} LW$; this is clear because (i) $\pi_1 \epsilon_{LW \times_Y LW'} = \epsilon_{LW} LR \pi_1 $ by naturality of $\epsilon$, (ii) $LR \pi_1 L(R \pi_1, R \pi_2 )^{-1} L(\eta_W \times \eta_{W'}) = L \pi_1 L(\eta_W \times \eta_{W'}) = L \eta_W L \pi_1 $ and (iii) $\epsilon_{LW}L  \eta_W = Id_{LW}$.
\end{proof}
\subsection{Proof of Proposition \ref{Beck_Chevalley}}
\begin{proof}
Given Lemma \ref{characterizepreservingstrength} what is required is a proof that  $\overline{R}^{op}(q^Y) \cong [\overline{R}^{op}(q)]^{RY}$ for any dcpo homomorphism $q: \Omega_{\mathcal{E}} X_1 \rTo \Omega_{\mathcal{E}} X_2$. From the explicit description of the natural transformation corresponding to a dcpo homomorphism given in \cite{towdcpo} it can be seen that $q^Y$ is given by $\gamma_*^Y (\gamma^Y)^{\#} (q)$ so by part (ii) of Lemma \ref{R_Y_Functorial} (which shows that $\gamma^{RY}_* \overline{R_Y}^{op}( \_)=  \overline{R}^{op} \gamma^Y_* (\_)$) the proof reduces to checking that $\overline{R_Y}^{op} (\gamma^Y)^{\#} \cong (\gamma^{RY})^{\#} \overline{R}^{op}$ via $\Omega_{\mathcal{F}}(R\pi_1,R\pi_2)$. This follows from Corollary \ref{Rights_are_same} with $\mathcal{D} = \mathbf{Loc}_{\mathcal{F}}/RY$, $\mathcal{C} = \mathbf{Loc}_{\mathcal{E}}$ and $\Sigma_Y L_Y \dashv R_Y Y^*$ and $L\Sigma_{RY} \dashv (RY)^* R$ as the two adjunctions. Here we are of course using the notation $\Sigma_Y : \mathbf{Loc} / Y \rTo \mathbf{Loc}$ for the forgetful functor and $Y^*$ for its right adjoint, the pullback functor. The corollary is applicable since by Lemma \ref{L_Lemma} $\gamma^Y_*\overline{L_Y}^{op}(r)= \overline{L}^{op}\gamma_*^{RY}(r)$; i.e. $\overline{\Sigma_Y}\overline{L_Y} = \overline{L} \overline{\Sigma_{RY}}$, and $(RY)^*R \cong R_Y Y^*$ via $(R\pi_1,R\pi_2): R(\_ \times \_ ) \rTo ^{\cong} R(\_) \times R(\_)$.
\end{proof}
Note that Beck-Chevalley holds for the diagram of adjunctions,
\begin{diagram}
{\mathbf{Loc}_{\mathcal{F}}}/RY & \pile{\rTo^{L_Y} \\ \lTo_{R_Y}} & {\mathbf{Loc}_{\mathcal{E}}}/Y \\
\dTo^{\Sigma_{RY}} \dashv \uTo_{(RY)^*} &   &  \dTo^{\Sigma_Y} \dashv \uTo_{Y^*} \\
\mathbf{Loc}_{\mathcal{F}} & \pile{\rTo^L \\ \lTo_R} & \mathbf{Loc}_{\mathcal{E}} \\
\end{diagram}
The proof just given shows that Beck-Chevalley also holds for the lifting to Kleisli categories of this diagram of adjunctions. Indeed, conversely, if Beck-Chevalley holds for the lifting of the above diagram then $R$ preserves the strength.
\section{Representing geometric morphisms using the lower and upper power locale monads}
This section proves the equivalence of (iii) and (iv) of the main theorem. See, for example, \cite{PrePrePre} for the definitions of the lower and the upper power locale monads, $P_L$ and $P_U$ respectively. The key property of their functor parts is that for each locale $X$ there is a universal suplattice (respectively preframe) homomorphism $\Omega X \rTo^{\diamond_{X}} \Omega P_L (X)$ ($\Omega X \rTo^{\square_{X}} \Omega P_U (X)$) such that for every suplattice (respectively preframe) homomorphism $\rho: \Omega X \rTo  \Omega Y$ there is a unique map $f_{\rho}: Y \rTo P_L(X)$ (respectively $f_{\rho}: Y \rTo P_U(X)$) such that $\rho=\Omega f_{\rho}\diamond_X$ ($\rho=\Omega f_{\rho}\square_X$). The rest of the monad structure follows from this defining universal property of the functor $P_L:\mathbf{Loc}\rTo\mathbf{Loc}$ ($P_U:\mathbf{Loc}\rTo\mathbf{Loc}$). The resulting Kleisli category $\mathbf{Loc}_{P_L}$ ($\mathbf{Loc}_{P_U}$) is isomorphic to the opposite of the category whose objects are frames and whose morphisms are suplattice (preframe) homomorphisms.
\begin{definition}
If $\mathcal{C}$ is an order-enriched category with two monads $\mathbb{T}%
_{a}\mathbb{=}(T_{a},\eta ^{a},\mu ^{a})$ and $\mathbb{T}_{a}\mathbb{=}%
(T_{b},\eta ^{b},\mu ^{b})$ then a \emph{distribution isomorphism} is a natural isomorphism $\psi :T_{a}T_{b}\rTo T_{b}T_{a}$
such that the diagrams
\begin{diagram}
&& T_{a} &&  \\
&\ldTo^{T_{a}\eta ^{b}} &  & \rdTo^{\eta _{T_{a}}^{b}}& \\
T_{a}T_{b} && \rTo^{\psi } && T_{b}T_{a} \\
&\luTo^{\eta _{T_{b}}^{a}} &  & \ruTo^{T_{b}\eta ^{a}} &\\
&& T_{b} &&
\end{diagram}
and
\begin{diagram}
T_{a}T_{a}T_{b} & \rTo^{T_{a}\psi }& T_{a}T_{b}T_{a} &
\rTo^{\psi _{T_{a}}} & T_{b}T_{a}T_{a} \\
\dTo^{\mu _{T_{b}}^{a}} &  &  &  & \dTo^{T_{b}\mu ^{a}} \\
T_{a}T_{b} &  & \rTo^{\psi } &  & T_{b}T_{a} \\
\uTo^{T_{a}\mu ^{b}}&  &  &  & \uTo^{ \mu _{T_{a}}^{b}} \\
T_{a}T_{b}T_{b} & \rTo^{\psi _{T_{b}}} & T_{b}T_{a}T_{b}
& \rTo^{T_{b}\psi } & T_{b}T_{b}T_{a}
\end{diagram}
both commute.
\end{definition}
The notion of a distribution isomorphism is of interest because given two monads and a distribution isomorphism a third monad naturally arises:
\begin{lemma}
\label{doublelemma}If $\mathcal{C}$ is an order-enriched category with
two order-enriched monads $\mathbb{T}_{a}$ and $\mathbb{T}_{b}$ for which there is a distribution isomorphism $\psi: T_a T_b \rTo^{\cong} T_b T_a$
then there is another order-enriched monad, $\mathbb{T}_{ab}$, given by
\begin{equation*}
(T_{a}T_{b},Id\overset{\eta ^{a}}{\rTo }T_{a}\overset{T_{a}\eta
^{b}}{\rTo }T_{a}T_{b},T_{a}T_{b}T_{a}T_{b}\overset{T_{a}\psi^{-1}
_{T_{b}}}{\rTo }T_{a}T_{a}T_{b}T_{b}\overset{\mu ^{a}_{T_b T_b}}{%
\rTo }T_{a}T_{b}T_{b}\overset{T_a \mu^b }{\rTo }%
T_{a}T_{b})\text{.}
\end{equation*}
\end{lemma}
\begin{proof}
The proof is a routine though lengthy diagram chase.
\end{proof}
The paper \cite{PrePrePre} provides an example of a distribution isomorphism by effectively showing that there is a distribution isomorphism between the lower and upper power locale monads. Indeed it was the observation that the lower and upper power locales commute and so give rise to a third power locale (the `double' power locale) which initially attracted interest in the study of $\mathbb{P}$. The distribution isomorphism, $\psi_X : P_L P_U (X)  \rTo^{\cong} P_U P_L (X)$, is the unique locale map such that map $\Omega \psi_X \square_{P_LX} \diamond_X = \diamond_{P_UX} \square_X$. Note, (\cite{PrePrePre}), that  both $\square_{P_LX} \diamond_X$ and $\diamond_{P_UX} \square_X$ are universal dcpo homomorphism (with codomains $\Omega P_L P_U (X)$ and $\Omega P_U P_L (X)$ respectively).

Our final definition provides clarity on the terms used in the statement (iv) of the main theorem:
\begin{definition}\label{preserves_distribution}
 If $\mathcal{C}$ and $\mathcal{D}$ are two order-enriched categories each with two monads; $\mathbb{T}^{\mathcal{C}}_a,$ $\mathbb{T}^{\mathcal{C}}_b$, $\mathbb{T}^{\mathcal{D}}_a$, and $\mathbb{T}^{\mathcal{D}}_b$, for which there are two distribution isomorphisms, $\psi^{\mathcal{C}} :T^{\mathcal{C}}_{a}T^{\mathcal{C}}_{b}\rTo T^{\mathcal{C}}_{b}T^{\mathcal{C}}_{a}$ and $\psi^{\mathcal{D}} :T^{\mathcal{D}}_{a}T^{\mathcal{D}}_{b}\rTo T^{\mathcal{D}}_{b}T^{\mathcal{D}}_{a}$, then for any $F: \mathcal{C} \rTo \mathcal{D}$ for which there are two monad opfunctors $ \phi^a: F T_a^{\mathcal{C}} \rTo T_a^{\mathcal{D}} F$ and $ \phi^b: F T_b^{\mathcal{C}} \rTo T_b^{\mathcal{D}} F$ then $F$ is said to \emph{preserve the distribution isomorphism up to the monad opfunctors} provided the diagram
\begin{diagram}
F_a^{\mathcal{C}}  T_b^{\mathcal{C}}& \rTo^{F \psi^{\mathcal{C}}} & F T^{\mathcal{C}}_b T_a^{\mathcal{C}} \\
\dTo^{\phi^a_{T_b^{\mathcal{C}}}}   &                             & \dTo^{\phi^b_{T_a^{\mathcal{C}}}}    \\
T_a^{\mathcal{D}} F T_b^{\mathcal{C}} &                           & T_b^{\mathcal{D}} F T_a^{\mathcal{C}} \\
\dTo^{T_a^{\mathcal{D}} \phi^b }      &                            & \dTo^{T_b^{\mathcal{D}}\phi^a}         \\
T_a^{\mathcal{D}} T_b^{\mathcal{D}} F & \rTo^{\psi_F^{\mathcal{D}}} & T_b^{\mathcal{D}} T_a^{\mathcal{D}} F \\
\end{diagram}
commutes.
\end{definition}
With this we can now establish (iii) implies (iv) of the main theorem: Lemma \ref{suplatticespeciallemma} shows how to lift $L \dashv R$ to the Kleisli categories of $P_L$ and since the proof of that lemma works equally well with finitary frame meet in place of join it is also clear how to lift $L \dashv R$ to the Kleisli categories of $P_U$. This results in two monad isomorphisms $\phi^L: R P^{\mathcal{E}}_L \rTo^{\cong} P^{\mathcal{F}}_L R$ (which, for each locale $X$ over $\mathcal{E}$ corresponds to the unique frame homomorphism $\Omega_{\mathcal{F}} \phi^L_X$ such that $\Omega_{\mathcal{F}} \phi^L_X \diamond_{RX} = \overline{R}^{op}(\diamond_X)$) and $\phi^U: R P^{\mathcal{E}}_U \rTo^{\cong} P^{\mathcal{F}}_U R$ (such that $\Omega_{\mathcal{F}} \phi^U_X \square_{RX} = \overline{R}^{op}(\square_X)$ for each $X$). So to complete a verification of (iii) implies (iv) of the main theorem it remains to check that $R$ preserves $\psi$ (up to $\phi^L$ and $\phi^U$). Because $\square_{P^{\mathcal{F}}_L(RX)} \diamond_{RX}$ is a universal dcpo homomorphism (from $\Omega_{\mathcal{F}} (RX)$ to $\Omega_{\mathcal{F}} P_U^{\mathcal{F}}P^{\mathcal{F}}_L(RX)$) to prove that $R$ preserves $\psi$ it is sufficient to check, for each locale $X$ over $\mathcal{E}$, that
\begin{eqnarray*}
\Omega_{\mathcal{F}}(R \psi^{\mathcal{E}}_X)\Omega_{\mathcal{F}}(\phi^U_{P^{\mathcal{E}}_L X})\Omega_{\mathcal{F}}(P^{\mathcal{F}}_U \phi^L_X)\square_{P^{\mathcal{F}}_L(RX)} \diamond_{RX} \text{ \ \ \ \ \ \ \ \ (I)}
\end{eqnarray*}
is equal to
\begin{eqnarray*}
 \Omega_{\mathcal{F}}( \phi^L_{P^{\mathcal{E}}_U X}) \Omega_{\mathcal{F}}(P^{\mathcal{F}}_L \phi^U_X) \Omega_{\mathcal{F}}(\psi^{\mathcal{F}}_{RX}) \square_{P^{\mathcal{F}}_L(RX)} \diamond_{RX} \text{. \ \ \ \ \ \ \ \ (II)}
\end{eqnarray*}
(I) is equal to
\begin{eqnarray*}
 && \Omega_{\mathcal{F}}(R \psi^{\mathcal{E}}_X)\Omega_{\mathcal{F}}(\phi^U_{P^{\mathcal{E}}_L X})\square_{R P^{\mathcal{E}}_L(X)} \Omega_{\mathcal{F}}( \phi^L_X)\diamond_{RX} \\
  & = & \Omega_{\mathcal{F}}(R \psi_X^{\mathcal{E}}) \overline{R}^{op}(\square_{P^{\mathcal{E}}_L(X)}) \overline{R}^{op} ( \diamond_X) \\
  & = & \overline{R}^{op}[\Omega_{\mathcal{E}}( \psi_X^{\mathcal{E}}) \square_{P^{\mathcal{E}}_L(X)}  \diamond_X] \\
  & = & \overline{R}^{op}(\diamond_{P_U^{\mathcal{E}}(X)}\square_X) \text{.}
\end{eqnarray*}
(II) on the other hand is equal to
\begin{eqnarray*}
&& \Omega_{\mathcal{F}}( \phi^L_{P^{\mathcal{E}}_U X}) \Omega_{\mathcal{F}}(P^{\mathcal{F}}_L \phi^U_X) \diamond_{P^{\mathcal{F}}_U(RX)} \square_{RX} \\
& = & \Omega_{\mathcal{F}}( \phi^L_{P^{\mathcal{E}}_U X})  \diamond_{RP^{\mathcal{E}}_U(X)} \Omega_{\mathcal{F}}(\phi^U_X)\square_{RX} \\
& = & \overline{R}^{op}(\diamond_{P_U^{\mathcal{E}}(X)}) \overline{R}^{op}(\square_X) \\
& = &  \overline{R}^{op}(\diamond_{P_U^{\mathcal{E}}(X)}\square_X)\text{.}
\end{eqnarray*}
This completes our verification of (iii) implies (iv) of the main theorem.

In the other direction, given an order-enriched adjunction $L\dashv R$ with $\phi_L:R P_L^{\mathcal{E}}\rTo^{\cong} P_L^{\mathcal{F}}R$ and $\phi_U:R P_U^{\mathcal{E}}\rTo^{\cong} P_U^{\mathcal{F}}R$ such that $R$ preserves $\psi: P_L P_U \rTo^{\cong} P_U P_L$ then to complete a proof of (iv) implies (iii) two final facts need to be checked: (a) there is monad isomorphism $\phi : R \mathbb{P}_{\mathcal{E}}  \rTo^{\cong} \mathbb{P}_{\mathcal{F}} R$ and (b) $R$ preserves finitary coproduct. For (a) note that because $\mathbb{P}$ arises from the lower and upper power locale monads (as in Lemma \ref{doublelemma}) we can rely on a general proof that if there are two monad isomorphisms (and $R$ preserves the given distribution isomorphism) then there is a monad isomorphism on the composite monad and for this it is sufficient to show:
\begin{lemma}
Given a distribution isomorphism with $F$ preserving it as in Definition \ref{preserves_distribution}, then $\phi:F T^{\mathcal{C}}_{ab} \rTo  T^{\mathcal{D}}_{ab} F$
given by $F T^{\mathcal{C}}_a T^{\mathcal{C}}_b \rTo^{\phi^a_{T^{\mathcal{C}}_b}} T^{\mathcal{D}}_a F T^{\mathcal{C}}_b \rTo^{T^{\mathcal{D}}_a \phi^b} T^{\mathcal{D}}_a T^{\mathcal{D}}_b F$ is a monad opfunctor.
\end{lemma}
\begin{proof}
The proof is a routine diagram chase. For the first diagram in the definition of a monad opfunctor, note that
\begin{diagram}
                            &                    &                             &                                  &                F  \\
                            &                    &                             & \ldTo^{F\eta^{\mathcal{C},a}}  & \dTo_{\eta^{\mathcal{D},a}_F} \\
                            &                    &  FT^{\mathcal{C}}_a          & \rTo^{\phi^a}                    & T^{\mathcal{D}}_a F \\
                            &\ldTo^{F T^{\mathcal{C}}_a\eta^{\mathcal{C},b}}&  & \ldTo^{ T^{\mathcal{D}}_a F \eta^{\mathcal{C},b}} & \dTo_{T^{\mathcal{D}} \eta^{\mathcal{D},b}_F} \\
FT^{\mathcal{C}}_a T^{\mathcal{C}}_b & \rTo^{\phi^a_{T^{\mathcal{C}}_b}} & T^{\mathcal{D}}_a F T^{\mathcal{C}}_b & \rTo^{T^{\mathcal{D}}_a \phi^b} & T^{\mathcal{D}}_a T^{\mathcal{D}}_b F \\
\end{diagram}
commutes because $\phi^a$ and $\phi^b$ are monad opfunctors. Finally notice that the diagram
\begin{diagram}
FT^{\mathcal{C}}_{abab} & \rTo{\phi^a_{T^{\mathcal{C}}_{bab}}} & T^{\mathcal{D}}_a F T^{\mathcal{C}}_{bab} & \rTo^{T^{\mathcal{D}}_a \phi^b_{T^{\mathcal{C}}_{ab}}} & T^{\mathcal{D}}_{ab}  F T^{\mathcal{C}}_{ab} & \rTo^{T^{\mathcal{D}}_{ab} \phi^a_{T^{\mathcal{C}}_b}} & T^{\mathcal{D}}_{aba} F T^{\mathcal{C}}_b & \rTo^{T^{\mathcal{D}}_{aba} \phi^b} & T^{\mathcal{D}}_{abab} F \\
\dTo_{FT^{\mathcal{C}}_a (\psi^{\mathcal{C}})^{-1}_{T^{\mathcal{C}}_b}} & & \dTo_{T^{\mathcal{D}}_a F(\psi^{\mathcal{C}})^{-1}_{T^{\mathcal{C}}_b}} & &  & & \dTo^{T^{\mathcal{D}}_a (\psi^{\mathcal{D}})^{-1}_{F T^{\mathcal{C}}_b}} &  & \dTo^{T^{\mathcal{D}}_a ( \psi^{\mathcal{D}})^{-1}_{T^{\mathcal{D}}_b F}} \\
FT^{\mathcal{C}}_{aabb} & \rTo^{\phi^a_{T^{\mathcal{C}}_{abb}}} & T^{\mathcal{D}}_a F T^{\mathcal{C}}_{abb} & \rTo^{T^{\mathcal{D}}_a \phi^a_{T^{\mathcal{C}}_{bb}}} & T^{\mathcal{D}}_{aa} F T^{\mathcal{C}}_{bb} & \rTo^{T^{\mathcal{D}}_{aa} \phi^b_{T^{\mathcal{C}}_b}} & T^{\mathcal{D}}_{aab} F T^{\mathcal{C}}_b & \rTo^{T^{\mathcal{D}}_{aab} \phi^b } & T^{\mathcal{D}}_{aabb} F \\
\dTo_{F \mu^{\mathcal{C},a}_{T^{\mathcal{C}}_{bb}}} & & & & \dTo^{\mu^{\mathcal{D},a}_{F T^{\mathcal{C}}_{bb}}} & & \dTo^{\mu^{\mathcal{D},a}_{T^{\mathcal{D}}_b F T^{\mathcal{C}}_b}} & & \dTo^{\mu^{\mathcal{D},a}_{T^{\mathcal{D}}_{bb} F}} \\
FT^{\mathcal{C}}_{abb} &  & \rTo^{\phi^a_{T^{\mathcal{C}}_{bb}}}   & & T^{\mathcal{D}}_a F T^{\mathcal{C}}_{bb} & \rTo^{T^{\mathcal{D}}_a \phi^b_{T^{\mathcal{C}}_b}} & T^{\mathcal{D}}_{ab} F T^{\mathcal{C}}_b & \rTo^{T^{\mathcal{D}}_{ab} \phi^b }& T^{\mathcal{D}}_{abb}  F \\
\dTo_{F T^{\mathcal{C}}_a \mu^{\mathcal{C},b}} & & & & \dTo^{T^{\mathcal{D}}_a F \mu^{\mathcal{C},b}} & & & & \dTo^{T^{\mathcal{D}}_a \mu^{\mathcal{D},b}_F} \\
FT^{\mathcal{C}}_{ab} &  &  \rTo^{\phi^a_{T^{\mathcal{C}}_b}} &   &  T^{\mathcal{D}}_a F T^{\mathcal{C}}_b &  & \rTo^{T^{\mathcal{D}}_a \phi^b}  &  & T^{\mathcal{D}}_{ab} F \\
\end{diagram}
commutes (where we are following the notation $T_{abc} = T_a T_b T_c$ etc). To see this note that the rectangle in the first row commutes as $F$ preserves the distribution isomorphism, the rectangle in the middle row commutes and the bottom right rectangle commutes because $\phi^a$ and $\phi^b$ respectively are monad opfunctors. The other squares and rectangle commute by naturality.
\end{proof}
\begin{lemma}
Given an order-enriched adjunction $L\dashv R:\mathbf{Loc}_{\mathcal{F}}\pile{\rTo \\ \lTo} \mathbf{Loc}_{\mathcal{E}}$ with a monad isomorphism $\phi_L:R P_L^{\mathcal{E}}\cong P_L^{\mathcal{F}}R$, $R$ preserves finitary coproduct.
\end{lemma}
\begin{proof}
Because there is a monad isomorphism there is a lifted adjunction $\overline{R}^{op} \dashv \overline{L}^{op}$ between categories whose objects are frames and whose morphisms are suplattice homomorphisms (i.e. between the opposites of the Kleisli categories of $P^{\mathcal{E}}_L$ and $P^{\mathcal{F}}_L$).
For any locale $W$ over $\mathcal{F}$,
\begin{eqnarray*}
\mathbf{Sup}_{\mathcal{F}}( \Omega_{\mathcal{F}}R 0_{\mathcal{E}},\Omega_{\mathcal{F}}W) & \cong & \mathbf{Sup}_{\mathcal{E}}(\Omega_{\mathcal{E}}0_{\mathcal{E}} , \Omega_{\mathcal{E}}LW) \\
& \cong & \{*\} \\
& \cong & \mathbf{Sup}_{\mathcal{F}}(\Omega_{\mathcal{F}}0_{\mathcal{F}} , \Omega_{\mathcal{F}}W) \\
\end{eqnarray*}
naturally in suplattice homomorphisms between $\Omega_{\mathcal{F}}W$s. It follows that $R0_{\mathcal{E}} \cong 0_{\mathcal{F}}$.

For binary coproduct say $X_1$ and $X_2$ are locales over $\mathcal{E}$. Then,
\begin{eqnarray*}
 \mathbf{Sup}_{\mathcal{F}}(\Omega_{\mathcal{F}}R(X_1+X_2) , \Omega_{\mathcal{F}}W) & \cong & \mathbf{Sup}_{\mathcal{E}}(\Omega_{\mathcal{E}}(X_1 + X_2) , \Omega_{\mathcal{E}}LW) ) \\
 & \cong & \mathbf{Sup}_{\mathcal{E}}(\Omega_{\mathcal{E}}X_1 , \Omega_{\mathcal{E}}LW) \times   \mathbf{Sup}_{\mathcal{E}}(\Omega_{\mathcal{E}}X_2 , \Omega_{\mathcal{E}}LW)\\
 & \cong & \mathbf{Sup}_{\mathcal{F}}(\Omega_{\mathcal{F}}RX_1 , \Omega_{\mathcal{F}}W) \times   \mathbf{Sup}_{\mathcal{F}}(\Omega_{\mathcal{F}}RX_2 , \Omega_{\mathcal{F}}W)\\
\end{eqnarray*}
naturally in suplattice homomorphisms between $\Omega_{\mathcal{F}}W$s. It follows that $\Omega_{\mathcal{F}}R(X_1 + X_2)$ is the suplattice coproduct of $\Omega_{\mathcal{F}}R X_1$ and $\Omega_{\mathcal{F}}R X_1$ and so $R(X_1+X_2) \cong RX_1 + RX_2$ since suplattice coproduct is given by set theoretic product.

Intuitively this completes the proof, however to properly complete the proof we do need to also check that it is indeed the canonical map $RX_1+RX_2 \rTo^{ [R \coprod_1 , R \coprod_2 ] } R(X_1 +X_2)$ that is the isomorphism in question. Firstly note that by looking at the image of the identity $Id_{\Omega_{\mathcal{F}} R(X_1 + X_2)}$ under the order isomorphisms above it can be seen that $\overline{R}^{op} \coprod^S_i : \Omega_{\mathcal{F}} RX_i \rTo \Omega_{\mathcal{F}} R(X_1 + X_2)$ for $i=1,2$ are the suplattice coprojections to $\Omega_{\mathcal{F}} R(X_1 + X_2)$ where $\coprod^S_i : \Omega_{\mathcal{E}} X_i \rTo \Omega_{\mathcal{E}} (X_1 + X_2)$ are the suplattice coprojections (so, $\coprod^S_1(a) = (a, 0_{\Omega X_2})$ and  $\coprod^S_2(b) = (0_{\Omega X_1}, b)$). Notice that since $\Omega_{\mathcal{E}} \coprod_i \coprod^S_i = Id$ for $i=1,2$ and $\Omega_{\mathcal{E}} \coprod_i \coprod^S_j = 0$ for $i \neq j$ we have that $\Omega_{\mathcal{F}} R \coprod_i \overline{R}^{op}\coprod^S_i = Id$ for $i=1,2$ and $\Omega_{\mathcal{F}} R \coprod_i \overline{R}^{op} \coprod^S_j = 0$ for $i \neq j$; the last because $\overline{R}^{op}$ preserves the zero map since it is left adjoint to the unique map back to $\Omega 0$ and we have established that $R$ preserves the zero object. If we then define a suplattice homomorphism $\psi : \Omega_{\mathcal{F}} (RX_1+RX_2)  \rTo  \Omega_{\mathcal{F}} R(X_1 + X_2)$ by $\psi(a,b) = \overline{R}^{op} \coprod^S_1 a \vee \overline{R}^{op} \coprod^S_2 b $ then certainly $\Omega_{\mathcal{F}}[R\coprod_1,R\coprod_2]\psi = Id$ because $\Omega_{\mathcal{F}}[R\coprod_1,R\coprod_2](c) =(\Omega_{\mathcal{F}}R\coprod_1(c),\Omega_{\mathcal{F}}R\coprod_2(c))$. But to prove that $\psi \Omega_{\mathcal{F}} [ R \coprod_1, R \coprod_2] = Id$ it is sufficient to prove that $\psi \Omega_{\mathcal{F}} [ R \coprod_1, R \coprod_2]\overline{R}^{op} \coprod^S_i  =\overline{R}^{op} \coprod^S_i  $ for $i=1,2$ since we have noted that $\overline{R}^{op} \coprod^S_i$ are the suplattice coprojections. This last is immediate from the properties $ \Omega_{\mathcal{E}}R \coprod_i$ and $ \overline{R}^{op} \coprod^S_j$ already noted and so $[R \coprod_1 , R \coprod_2 ]$ is an isomorphism and the proof is complete.
\end{proof}
\section{Final comments}
For this paper, applying the techniques of \cite{towgeom}, we have established that geometric morphisms can be represented as those adjunctions between the corresponding categories of locales that commute with the double power locale monad and for which the right adjoint preserves finite coproduct. Additionally it has been shown that geometric morphisms correspond to adjunctions that commute with the lower and upper power locale monads and for which the right adjoint preserves the distribution between the lower and upper power locales. Along the way we have, in effect, made the technical observation that the key to proving the representation of geometric morphisms in this manner is knowing that the right adjoint also preserves the strength of the double power locale monad.

In fact the power constructions can be developed axiomatically (e.g. \cite{Vicpoints} and \cite{towhofman}, though for this last see \cite{victow} for the driving observation which shows how to interpret the double power construction as an exponential). It is therefore possible to develop a categorical account of geometric morphisms. A `categorical' geometric morphism is an adjunction that commutes with the power locale monads. In such a context it is clear what a localic geometric morphism should be and, further, if one defines a hyperconnected geometric morphism to be corresponding to an adjunction such that $L1\cong1$ it is possible to prove that every (categorical interpretation of) a geometric morphism factors uniquely as a hyperconnected geometric morphism followed by a localic one. The proof is essentially straightforward and relies on a categorical proof of the slice stability result above (Theorem \ref{slicestability}) available since there is a categorical account of weak triquotient assignments in locale theory, see \cite{towaxioms}. However, the situation would be much more appealing if one could show that, categorically, the property of commuting with the power locale monads implies that Frobenius reciprocity holds of the pullback adjunction. This is true for categories of locales (\cite{towgeom}) and were it to be true categorically then the categorical account of the hyperconnected-localic factorization would be pullback stable. In other words this extra step would mean that the categorical situation is more in keeping with what we know to be the case from topos theory.
In summary, whilst the main result of this paper shows that the notion of geometric morphism is closely related to the power locale constructions, to develop the theory further it appears that more work is required on categorical interpretations of locale theory.

\end{document}